\definecolor{darkred}{rgb}{0.5,0,0}
\definecolor{darkblue}{rgb}{0,0,0.5}
\definecolor{darkgreen}{rgb}{0,0.5,0}
\newtheorem{theorem}{Theorem}[section]
\newtheorem{lemma}[theorem]{Lemma}
\newtheorem{corollary}[theorem]{Corollary}
\newtheorem*{claim*}{Claim}
\newenvironment{claimproof}[1][Proof of the Claim.]{\proof[#1]}{\endproof}
\theoremstyle{remark}
\newtheorem{remark}{Remark}
\DeclareMathOperator{\tw}{tw}
\DeclareMathOperator{\tww}{tww}
\DeclareMathOperator{\stw}{stw}
\DeclareMathOperator{\rdeg}{red-deg}
\newcommand{\dotcup}{\mathbin{\dot\cup}}
\newcommand{\symdiff}{\mathbin{\triangle}}
\newcommand{\red}{{\text{red}}}
\tikzstyle{svertex}=[circle,inner sep=0.cm, minimum size=1.4mm, fill=black, draw=black]
\title{Twin-width of graphs with tree-structured decompositions}
\newcommand{\mailadress}[1]{\href{mailto:#1@mathematik.tu-darmstadt.de}{\texttt{#1}}}
\author{Irene Heinrich\footnote{The first author's research leading to these results has received funding from the European Research Council (ERC) under
the European Union’s Horizon 2020 research and innovation programme (EngageS: grant agreement No. 820148).} \quad and \quad Simon Raßmann\\[4mm] TU Darmstadt \\[4mm]
\(\mathclap{\{\mailadress{heinrich}, \mailadress{rassmann}\}\texttt{@mathematik.tu-darmstadt.de}}\)}
\begin{document}

\maketitle

\begin{abstract}
The twin-width of a graph measures its distance to co-graphs and generalizes classical width concepts such as tree-width or rank-width.
Since its introduction in 2020~\cite{tww1a},
a mass of new results has appeared relating twin-width to group theory, model theory, combinatorial optimization, and structural graph theory.

We take a detailed look at the interplay between the twin-width of a graph and the twin-width of its
components under tree-structured decompositions:
We prove that the twin-width of a graph of strong tree-width $k$ is at most $\frac{3}{2}k + o(k)$, contrasting nicely with the result of~\cite{twwexptwa}, which states that twin-width can be exponential
in tree-width.
Further, we employ the fundamental concept from structural graph theory of decomposing a graph into highly connected components,
in order to obtain optimal linear bounds on the twin-width of a graph given the widths of its biconnected components.
For triconnected components we obtain a linear upper bound if we add red edges to the components indicating the splits which led to the components.
Extending this approach to quasi-4-connectivity, we obtain a quadratic upper bound.
Finally, we investigate how the adhesion of a tree decomposition influences the twin-width of the decomposed graph.
\end{abstract}

\section{Introduction}
Twin-width is a new graph parameter, which has received significant attention in structural graph theory since its introduction in 2020~\cite{tww1a}.
The \emph{twin-width}\footnote{We refer to the preliminaries of this paper (subsections \emph{graphs and trigraphs} as well as \emph{twin-width}) for an equivalent definition of twin-width which is based on merging vertices instead of vertex subsets.} of a graph~$G$, denoted by $\tww(G)$, is the minimum width over all contraction sequences of~$G$ and a \emph{contraction sequence} of $G$ is roughly defined as follows: we start with a discrete partition of the vertex set of~$G$ into $n$ singletons where~$n$ is the order of~$G$.
Now we perform a sequence of $n-1$ merges, where in each step of the sequence precisely two parts are merged causing the partition to become coarser, until eventually, we end up with just one part -- the vertex set of $G$.
Two parts of a partition of $V(G)$ are \emph{homogeneously connected} if either all or none of the possible cross edges between the two parts are present in $G$.
The red degree of a part is the number of other parts to which it is not homogeneously connected.
Finally, the width of a contraction sequence is the maximum red degree amongst all parts of partitions arising when performing the sequence.

In~\cite{tww1a} the authors show that twin-width generalizes other width parameters such as rank-width, and, hence also clique-width and tree-width.
Furthermore, given a graph $H$, the class of $H$-minor free graphs has bounded twin-width
and FO-model checking is FPT on classes of bounded twin-width when contraction sequences of small width are provided as part of the input, see~\cite{tww1a}.
Many combinatorial problems which are NP-hard in general allow for improved algorithms if the twin-width of the input graph is bounded from above and the graph is given together with a contraction sequence of small width~\cite{tww3a}.

\subparagraph*{Motivation.}
To decompose a graph into smaller components and estimate a certain parameter of the original graph from the parameters of its components is an indispensable approach of structural graph theory.
Graph decomposition is crucial not only in taming branch-and-bound trees but also for theoretical considerations since it allows for stronger assumptions (i.e., high connectivity) on the considered graphs.
There are various ways to decompose a graph, e.g., into biconnected, triconnected, or quasi-4-connected components, into tree decompositions of small adhesion (the maximum cardinality of the intersection of adjacent bags), into modules, or decomposition into the factors of a graph product.

The twin-width of a graph in terms of its biconnected components has already been considered in~\cite{gonne},
where the author obtains a bound which, however, is not tight.
To date there is no detailed analysis of the relation between the twin-width of a graph and the twin-width of its triconnected, or quasi-4-connected components.
The only tight result towards ($k$-)connected components is the basic observation that the twin-width of a graph is the maximum over the twin-widths of its ($1$-connected) components.
While there already exists a strong analysis of the interplay of tree-width and twin-width (cf.~\cite{bounding-twwa}), it is still open how twin-width behaves with respect to the adhesion of a given tree decomposition, which can be significantly smaller than the tree-width of a graph (as an example, consider a graph whose biconnected components are large cliques -- the adhesion is~1 whereas the tree-width is the maximum clique size).
Further, there exist many variants of tree-width, for example, strong tree-width \cite{stwSeese, stwHalin} for which the interplay with twin-width remains unexplored in the existing literature.

\subparagraph*{Our results.}
We prove the following bound on the twin-width of a graph:
\begin{restatable}{theorem}{Stwvstww}\label{thm: tww-vs-stw}
	If $G$ is a graph of strong tree-width $k$, then
	\[\tww(G)\leq \frac{3}{2}k + 1 + \frac{1}{2}(\sqrt{k \ln k} + \sqrt{k} + 2\ln k).\]
\end{restatable}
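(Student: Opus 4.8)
The plan is to contract $G$ along a width-$k$ strong tree-decomposition $(T,(B_t)_{t\in V(T)})$ from the leaves of $T$ towards its root; so fix such a decomposition, root $T$ arbitrarily, and for a node $t$ with parent $p$ write $V_t$ for the union of the bags in the subtree of $T$ below $t$ (inclusive). The defining property of strong tree-decompositions yields the structural fact that every edge of $G$ with exactly one endpoint in $V_t$ joins $B_t$ to $B_p$; in particular, once $V_t$ has been contracted to a single part, that part is adjacent only to $B_p$. Processing the nodes of $T$ in post-order, I would maintain the invariant that after $t$ is handled, $V_t$ has become one part $z_t$ whose only (red) neighbours lie in $B_p$. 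To handle $t$ with children $t_1,\dots,t_\ell$, I first recurse into $t_1,\dots,t_\ell$ one after another, and immediately after finishing $t_j$ I merge the resulting part $z_{t_j}$ into a single accumulator part $Y_t$. This lazy accumulation is what keeps a node of large degree in $T$ from inflating the red degree: $Y_t$ is incident only to $B_t$, so its red degree never exceeds $|B_t|\le k+1$, and at every moment a vertex of $B_t$ is red-adjacent to at most two ``pending'' child parts. After all children of $t$ have been processed we are left, locally, with the trigraph on $A:=B_t\cup\{Y_t\}$ on at most $k+2$ vertices, all of whose red edges meet $Y_t$, while the at most $k+1$ vertices of $B_p$ sit untouched as spectators; it remains to contract all of $A$ into one part $z_t$.

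The core of the argument is thus a local statement: if $A$ is a trigraph on at most $k+2$ vertices whose red edges all meet one fixed vertex, and $S$ is a set of at most $k+1$ spectators joined to $A$ only by black or absent edges, then $A$ can be contracted to a single part so that throughout the process every part, including every spectator, has red degree at most $\tfrac32 k + 1 + \tfrac12\bigl(\sqrt{k+\ln k}+\sqrt k+2\ln k\bigr)$. I would prove this in rounds. In the first round I pair up the vertices of $A$ other than the fixed vertex by a matching $M$ in which every matched pair has small \emph{disagreement}, the disagreement of a pair $\{u,v\}$ being the number of vertices of $A\cup S$ that are (red-or-black) adjacent to exactly one of $u,v$; concretely I take $M$ to be a \emph{maximal} matching in the graph whose edges are the pairs of disagreement at most a threshold $\theta=k+\mathrm{slack}$, where $\mathrm{slack}$ is a parameter of order $\sqrt k$ to be fixed below. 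Then every pair of $M$ ends the first round with red degree at most $\theta$, and $A$ has been split into roughly $\tfrac12|B_t|+\tfrac12|U|+1\approx \tfrac k2$ parts, where $U$ is the set of vertices left unmatched; from then on I merge the remaining parts of $A$ in any order down to one part. A part created in this second phase is red-adjacent to fewer than the current number of $A$-parts and to at most the $k+1$ spectators, so its red degree is at most $\tfrac k2+|U|+(k+1)+O(1)=\tfrac32 k+|U|+O(1)$ just after the first round and strictly smaller later; meanwhile the red degrees of the spectators, of the fixed vertex of $A$, and of the unmatched singletons never exceed $k+O(1)$. Hence everything hinges on the size of $U$, and the main term $\tfrac32 k$ is exactly ``$\tfrac k2$ parts after one pairing round'' plus ``$k$ spectators''.

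The point I expect to be hardest is therefore bounding $|U|$: I must show that a maximal matching in the low-disagreement graph leaves only $O(\sqrt k)$ vertices unmatched, equivalently that the complementary ``high-disagreement'' graph contains no large clique. If a set $W$ of vertices had all pairwise disagreements above $\theta$, then restricting the (red-or-black) neighbourhoods of the vertices of $W$ to the roughly $2k-|W|$ other relevant vertices would give a binary code of length about $2k$, with $|W|$ codewords and minimum distance at least $\theta-|W|$, which exceeds half the length once $\theta>k$; the Plotkin bound then forces $|W|$ to be $O(k/\mathrm{slack})$ or $O(\mathrm{slack})$, hence $O(\sqrt k)$ for our choice of $\mathrm{slack}$. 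Balancing the two contributions — the term $\theta=k+\mathrm{slack}$ to the red degree of a first-round pair against the term $|U|$ (controlled by $\mathrm{slack}$) in the second phase — and then optimising $\mathrm{slack}$ and tightening the code estimate yields the precise lower-order term $\tfrac12(\sqrt{k+\ln k}+\sqrt k+2\ln k)$. Finally I would verify that these local contractions fit together into one contraction sequence of $G$ with no extra loss: at any moment of the global process, the only parts of positive red degree belong to one currently active bag $B_t$, together with its accumulator $Y_t$ and its parent bag $B_p$ acting as spectators, so all the bounds above apply to all parts simultaneously, giving the stated bound on $\tww(G)$.
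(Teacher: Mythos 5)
Your global scheme is essentially the paper's: root the strong tree decomposition, contract bottom-up, and merge each finished child subtree immediately into an accumulator so that a bag never sees more than a constant number of child parts; the final bound then decomposes as (size of the parent bag) plus (cost of collapsing one bag) plus $O(1)$, exactly as in the paper. The difference is how the bag-collapsing cost is obtained. The paper simply cites the known result that every $m$-vertex graph admits a contraction sequence of width at most $\frac12(m+\sqrt{m+\ln m}+\sqrt m+2\ln m)$, applies it inside each bag (of size at most $k$ -- note that for strong tree-width the width is the maximum bag size, so your $k+1$'s should be $k$'s), and adds $k+1$ for the parent bag and the one merged child; the stated lower-order term is inherited verbatim from that citation. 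You instead try to re-prove a ``with spectators'' version of that bound from scratch via a one-shot low-disagreement matching plus a Plotkin-type bound, and this local lemma is where the proposal has a genuine gap.

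Concretely, the claim that ``every pair of $M$ ends the first round with red degree at most $\theta$'' is false as stated: the disagreement of $\{u,v\}$ is measured against original vertices, but once the pairs of $M$ are contracted one after another, a newly contracted pair $\{u,v\}$ can be inhomogeneously connected to an earlier pair $\{a,b\}$ even though $u$ and $v$ agree on both $a$ and $b$ (namely when $u$, and hence $v$, is adjacent to exactly one of $a,b$); such red edges are charged to no pair's disagreement, and there can be up to one per previously contracted pair, i.e.\ on the order of $k/2$ of them. So the width of round one is not $\theta+O(1)$, and the balance between $\theta$, $|U|$ and these cross-pair edges has to be redone. Relatedly, the Plotkin step is not self-consistent for slack $\Theta(\sqrt k)$ as sketched (it yields $|U|\lesssim 2k/\mathrm{slack}+2$, which exceeds the slack), so the threshold must be chosen differently, and the assertion that optimizing the slack ``yields the precise lower-order term $\frac12(\sqrt{k+\ln k}+\sqrt k+2\ln k)$'' is unsupported -- that expression is the signature of the cited black-box bound, not something a Plotkin computation produces. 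A repaired version of your accounting plausibly still gives $\frac32 k+O(\sqrt k)$, which would suffice for large $k$, but as written the proof does not establish the stated inequality.
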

This is a strong contrast to the result of~\cite{twwexptwa} that twin-width can be exponential in tree-width.
We further provide a class of graphs
for which the twin-width equals the strong tree-width,
showing that the linear dependence
of the above bound on \(k\) is necessary.
Further, we investigate how to bound the twin-width of a graph in terms of the twin-width of its highly connected components starting with biconnected components
and give a tight bound in this case.
\begin{restatable}{theorem}{Biconnectedbound} \label{Theorem: Bound on twin-width by twin-width of biconnected components}
	If \(G\) is a graph with biconnected components \(C_1,C_2\dots,C_\ell\),
	then
	\[\max_{i\in[\ell]}\tww(C_i)\leq \tww(G)\leq \max_{i\in[\ell]}\tww(C_i)+2.\]
\end{restatable}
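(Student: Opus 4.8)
The lower bound is immediate: each biconnected component $C_i$ is an induced subgraph of $G$, and twin-width is monotone under induced subgraphs (a contraction sequence of $G$ restricts to one of $C_i$ of no larger width), so $\tww(C_i) \le \tww(G)$ for all $i$.

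For the upper bound, the plan is to exploit the block-cut tree structure. Let $T$ be the block-cut tree of $G$, root it at an arbitrary block, and process the blocks in a bottom-up (leaves-first) order. The idea is to contract each block down to a single vertex — specifically down to the cut vertex connecting it to its parent in $T$ — using a near-optimal contraction sequence of that block, and to do this one block at a time. Concretely, let $k = \max_i \tww(C_i)$, and fix for each $C_i$ a contraction sequence of width at most $k$. When we process a leaf block $C_i$ whose connection to the rest of $G$ is through the single cut vertex $v$, we run the chosen contraction sequence of $C_i$, but modified so that $v$ is the last vertex standing; since the only vertices of $G$ outside $C_i$ that see $C_i$ are neighbors of $v$, during this local sequence every part other than the one containing $v$ is homogeneously connected (in fact non-adjacent) to everything outside $C_i$, so the red degree of such a part increases by at most the number of extra red edges incident to the part-containing-$v$ — I need to bound that carefully. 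After the sequence, $C_i$ has collapsed into $v$, and we have effectively deleted a leaf of $T$; iterate.

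The key quantitative step — and the main obstacle — is controlling the extra red degree incurred by the "active" part, the one containing the cut vertex $v$ toward the parent, while the block is being contracted. Outside vertices adjacent to $v$ form a homogeneous-looking set from the perspective of parts inside the block, but the part containing $v$ may be red-connected to parts outside $C_i$ (coming from blocks already contracted into other cut vertices of $C_i$, or to the parent side). Here is where the "$+2$" comes from: one should argue that at any moment the active part is red-connected to at most two "external" groups — essentially the parent direction and at most one child direction being resolved — and that a cut vertex of $C_i$ other than $v$ can be treated as already merged into a single neighbor-vertex before we start, so it contributes like an ordinary vertex of the block. Making this bookkeeping precise, i.e. showing that at all times each part has red degree at most $k+2$, is the heart of the argument; the rest is routine induction on the number of blocks. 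One clean way to organize it: prove by induction that for every rooted block-cut tree, $G$ admits a contraction sequence of width $\le k+2$ ending in the single vertex obtained by collapsing everything onto the root cut vertex, handling the base case (a single block, width $\le k$) and the inductive step (attach a pendant block at a cut vertex, interleave its width-$\le k$ sequence, accounting for $\le 2$ additional red neighbors throughout).
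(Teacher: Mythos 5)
There is a genuine gap: you correctly identify the strategy (root the block--cut tree, process leaf blocks bottom-up, keep the interface to the rest of the graph confined to the parent cut vertex), but the step you yourself call ``the heart of the argument'' is missing, and the specific route you sketch would fail. First, collapsing a leaf block $C_i$ ``down to the cut vertex $v$'', i.e.\ letting the part containing $v$ grow during the local sequence, is unsound: the moment $v$ is merged with any vertex of $C_i$, the resulting part is adjacent to $v$'s neighbors outside $C_i$ but not homogeneously so, hence it receives a red edge to \emph{every} outside neighbor of $v$; these are individual parts, so the red degree of the active part is bounded only by the outside degree of $v$, which is unbounded (think of a cut vertex lying in many blocks or having high degree in its parent block). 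Your accounting ``at most two external groups -- the parent direction and at most one child direction'' does not bound red degree, which counts parts, not directions. The paper avoids this by never touching $v$ at all: it proves a lemma that every trigraph $H$ admits a sequence contracting $V(H-v)$ into one vertex while keeping $v$ a singleton, of width at most $\tww(H)+1$; this is the first $+1$, and you also do not justify the cost of ``modifying'' an optimal sequence of $C_i$ so that $v$ is untouched.

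Second, since $v$ is not absorbed, each processed block leaves a remnant vertex hanging by a red edge from its cut vertex, and when the parent block $C$ is contracted later, every cut vertex of $C$ may carry such a pendant remnant. Your remark that such a remnant ``contributes like an ordinary vertex of the block'' is not substantiated: the width-$k$ sequence you fixed is a sequence of $C$, not of $C$ plus pendants, and absorbing the pendants requires extra contractions whose red-degree cost must be controlled. This is exactly where the paper's second $+1$ comes from: it passes to the auxiliary graph $\widehat{G}$ in which \emph{every} vertex has a red pendant, and interleaves pendant contractions with the block contractions (contract $r_x$ with $r_y$ just before contracting $x$ with $y$), so that each block vertex has at most one pendant at any time and the pendant parts have red degree at most $2$; the contracted remnant of a block then \emph{becomes} the red pendant of its cut vertex, which is what makes the induction over the block--cut tree go through at width $\max_i\tww(C_i)+2$. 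Without these two devices (the avoid-one-vertex lemma and the pendant/absorption invariant), your proposal does not yield the claimed bound.
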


Next, we consider decompositions into triconnected components:
\begin{restatable}{theorem}{Triconnectedtww}\label{Theorem: Bound on twin-width by twin-width of triconnected components}
	Let \(G\) be a \(2\)-connected graph and let $C_1, C_2, \dots, C_{\ell}$ be its triconnected components.
	For~$i \in [\ell]$ we construct a trigraph $\overline{C}_i$ from $C_i$ as follows:
	all virtual edges\footnote{That is, the pairs of vertices along which \(G\) was split to obtain the triconnected components.}
	of $C_i$ are colored red and all other edges remain black.
	If $C_i$ contains parallel edges, then we remove all but one of the parallel edges such
	that the remaining edge is red whenever one of the parallel edges was red.
	Then
	\[\tww(G)\leq \max\left(8\max_{i\in [\ell]}\tww(\overline{C}_i)+6,18\right).\]
\end{restatable}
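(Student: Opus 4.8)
The plan is to build a contraction sequence of $G$ by working along the tree of triconnected components. Recall that the triconnected components of a $2$-connected graph, together with the virtual edges, form a tree $T$ (Tutte's tree decomposition / the SPQR tree) of adhesion $2$: two components that are adjacent in $T$ share exactly one virtual edge, i.e.\ a pair of vertices, and $G$ is obtained by gluing the components along those pairs and then deleting all virtual edges. Root $T$ at an arbitrary component $C_r$. For $i\neq r$ let $\{u_i,v_i\}$ be the virtual edge joining $C_i$ to its parent, and let $G_i\subseteq G$ be the subgraph induced by the union of all components in the subtree of $T$ rooted at $C_i$. The key structural fact is that the only vertices of $G_i$ with a neighbour outside $G_i$ are $u_i$ and $v_i$; in particular a contracted part $P\subseteq V(G_i)$ behaves towards the rest of $G$ exactly like the set $P\cap\{u_i,v_i\}$. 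Put $t:=\max_i\tww(\overline{C}_i)$.

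First I would establish a one-component statement: for every $i$, $\overline{C}_i$ admits a contraction sequence of width at most $t+O(1)$ that, at the step before it would for the first time merge the parts of $u_i$ and $v_i$, instead leaves those two parts apart and finishes by contracting everything else into them, so that the final two parts separate $u_i$ and $v_i$. To get this, take an optimal sequence $\sigma$ of $\overline{C}_i$ and replay it while never merging the current part of $u_i$ with the current part of $v_i$: whenever $\sigma$ merges a $u_i$-part with a $v_i$-part, or later touches the part that $\sigma$ thinks contains both, pick one of the two sides and merge into it. At every moment our partition is that of $\sigma$ with a single part split in two; since a part that is homogeneously connected to a set $X$ is also homogeneously connected to every subset of $X$, every red degree increases by at most a constant, so the width stays $t+O(1)$.

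Next I would assemble these sequences bottom-up along $T$. Processing $C_i$ after all its children, the children's subtrees have been contracted so that each child virtual edge $\{u_j,v_j\}$ is realised by two parts separating $u_j$ from $v_j$ (essentially a red edge, as in $\overline{C}_i$), while the vertices of $C_i$ itself are still singletons. Hence the current trigraph on $V(G_i)$ looks like $\overline{C}_i$ up to the parent virtual edge $u_iv_i$, which we are free to imagine present and red because that only over-estimates red degrees; running the one-component sequence for $\overline{C}_i$ (with its child virtual endpoints replaced by the corresponding parts) then contracts $G_i$ down to two parts separating $u_i$ from $v_i$, at a cost in red degree that is $t+O(1)$ inside $V(G_i)$ by the structural fact above. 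At the root we simply finish with an optimal sequence of $\overline{C}_r$. Concatenating all these partial sequences yields a contraction sequence of $G$; a careful count of the surviving red edges gives the claimed bound $\max(8t+6,18)$.

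The main obstacle is precisely this final count. A single vertex of $C_i$ can be a virtual endpoint of many children at once, so the partitions produced by processing different children overlap, and one has to make sure that the contracted subtrees do not accumulate large red degree against each other or against the currently active component; here one exploits that a vertex lying on many virtual edges of a component $\overline{C}_p$ has large red degree already in $\overline{C}_p$, hence forces $\tww(\overline{C}_p)$ up, so that these cross-interactions are controlled by $t$. One must also treat polygon and bond components separately --- contracting a cycle several of whose edges are already red, and the removal of parallel edges prescribed in the statement --- which is where the absolute constant $18$ enters. These bookkeeping complications, rather than either of the two constructions above, are what inflates the factor from essentially $2$ to $8$.
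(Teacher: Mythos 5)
Your step 1 (an optimal sequence of \(\overline{C}_i\) can be replayed so that the parts of \(u_i\) and \(v_i\) are never merged, at the cost of \(+O(1)\) in width) is fine and is essentially the paper's Lemma~\ref{lemma: contractions not using a vertex}. The gap is in the assembly step. After a child subtree \(G_j\) has been contracted to \emph{two} parts, one containing \(u_j\) and one containing \(v_j\), those parts contain foreign vertices of \(G_j\setminus\{u_j,v_j\}\), and such a part is \emph{not} homogeneously connected to the outside neighbours of \(u_j\) (respectively \(v_j\)): it is adjacent to exactly those neighbours, but every such adjacency is red, because the foreign vertices have no edges leaving \(G_j\). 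So the resulting trigraph does not ``look like \(\overline{C}_i\) with the child virtual edges red''; instead \emph{every} neighbour of every virtual-edge endpoint acquires a red edge. This blows up: take a wheel whose rim edges are all replaced by paths of length two through new vertices \(s_k\). The triconnected components are the wheel with all rim edges virtual and triangles \(r_k s_k r_{k+1}\), all of twin-width \(O(1)\), so \(t=O(1)\); but in your scheme each \(s_k\) is merged into the part of some \(r_k\), and the hub, which is black-adjacent to all \(r_k\), ends up with red degree \(n\). Your proposed remedy --- that a vertex on many virtual edges already has large red degree in \(\overline{C}_p\) --- does not apply, since the hub lies on no virtual edge; it is merely adjacent to many virtual-edge endpoints. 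The width claim ``\(t+O(1)\) inside \(V(G_i)\)'' for replaying the \(\overline{C}_i\)-sequence with bloated endpoints therefore fails, and no bookkeeping of constants can recover it: the obstruction is that child material must never be contracted \emph{into} the separator vertices at all.

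The paper's route is built around exactly this point. It introduces contraction sequences \emph{respecting} a set \(A\): the vertices of \(A\) are never contracted and never incident to a red edge, so a child subtree is contracted not to two parts but to at most \(2^{|A|}\) residual parts classified by their neighbourhood in the separator, each homogeneously (black or non-) connected to it (Lemma~\ref{Lemma: Bound twin-width by twin-width of blocks}, with the gadget graphs \(\widetilde{P}_t\)). The price of respecting a \(2\)-element separator is the factor \(2^{|A|}=4\), obtained from the apex-doubling theorem (Theorem~\ref{theorem: Adding one apex keeps twin-width bounded}, Corollary~\ref{Corollary: Bound of pointed twin-width by twin-width}); the remaining factor \(2\) and the constants \(6\) and \(18\) come from the reductions \(\widetilde{P}_t\to\widehat{P}_t\to\overline{P}_t\) (Lemmas~\ref{Lemma: Bound twin-width by twin-width of blocks, simpler gadgets} and~\ref{Lemma: Bound twin-width by twin-width of blocks, torso version}, the latter via a Sperner/LYM-type count), evaluated at \(k=2\) --- not from special treatment of polygon or bond components. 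So the factor \(8\) is not bookkeeping inflation of an essentially width-\(t+O(1)\) scheme; it reflects the \(2^k\) cost of keeping the separator vertices entirely free of red edges, which is the ingredient your proposal is missing.
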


Similarly clean decompositions into $k$-connected graphs with $k>3$ cannot exist~\cite{Gro16a}; but we move on one more step and consider the twin-width of a graph with respect to its quasi-4 connected components, introduced by~\cite{Gro16a}.
\begin{restatable}{theorem}{Quasifourtww}\label{Theorem: Bound on twin-width by twin-width of quasi-4-connected components}
	Let $C_1, C_2, \dots, C_{\ell}$ be the quasi-4-connected components of a 3-connected graph \(G\).
	\begin{enumerate}
		\item For $i \in [\ell]$ we construct a trigraph $\widehat{C}_i$ by adding for every 3-separator \(S\) in $C_i$
		along which~$G$ was split a vertex \(v_S\) which we connect via red edges to all vertices in \(S\).
		Then
		\[\tww(G)\leq \max\left(8\max_{i\in [\ell]}\tww(\widehat{C}_i)+14, 70\right).\]
		\item For $i \in [\ell]$,  we construct a trigraph $\overline{C}_i$
		by turning all 3-separators in $C_i$ along which~$G$ was split into red cliques.
		Then
		\[\tww(G)\leq \max\left(4\max_{i\in[\ell]}\left(\tww(\overline{C}_i)^2+\tww(\overline{C}_i)\right)+14,70\right).\]
	\end{enumerate}
\end{restatable}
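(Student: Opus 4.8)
The plan is to build a contraction sequence for $G$ by sweeping the tree that underlies the quasi-4-connected decomposition. By~\cite{Gro16a,Gro16b} the components $C_1,\dots,C_\ell$ come with a tree $T$ turning them into the parts of a tree decomposition $(T,\beta)$ of $G$ of adhesion $3$, whose adhesion sets are precisely the $3$-separators $S$ along which $G$ was split; in particular adjacent components meet exactly in such an $S$, and every vertex and every edge of $G$ lies in some $C_i$. Rooting $T$ arbitrarily, one interleaves contraction sequences $\sigma_i$ of the marked components in a depth-first manner, so that at every moment only $O(1)$ adhesion sets are straddled, every already-finished subtree appears as a single blob hanging off the currently active component, and every not-yet-started subtree affects the active component only through its $\le 3$ boundary vertices. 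This is the interleaving scheme already used for Theorem~\ref{Theorem: Bound on twin-width by twin-width of triconnected components}, now run with adhesion $3$ instead of $2$.

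For part~(1) the apex vertex $v_S$, joined in red to the three vertices of $S$, is exactly the gadget needed: once the subtree below an edge $tt'\in E(T)$ with $\beta(t)\cap\beta(t')=S$ has been contracted to a single blob $b$, that blob sees the rest of $C_i$ only through $S$ and can stand in for $v_S$ while we run $\sigma_i$ on $\widehat{C}_i$. The red degree of any blob is then at most its red degree in $\sigma_i$, which is at most $\tww(\widehat{C}_i)$, plus an $O(1)$ contribution from the straddled $3$-element adhesion sets and the $O(1)$ blobs representing neighbouring finished or unstarted subtrees. Doing this bookkeeping exactly as for Theorem~\ref{Theorem: Bound on twin-width by twin-width of triconnected components}, with the constants inflated to account for $|S|=3$, gives $\tww(G)\le 8\max_i\tww(\widehat{C}_i)+14$; the constant $70$ takes over when this bound would drop below the cost of handling the decomposition tree itself.

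For part~(2) there is no apex: in $\overline{C}_i$ only the edges actually present inside a separator $S=\{a,b,c\}$ are coloured red, so a non-adjacent pair inside $S$ carries no marker even though it may be linked through the subtree hanging off $S$. The cleanest route is to reduce to part~(1) via the inequality $\tww(\widehat{C}_i)\le\binom{\tww(\overline{C}_i)+1}{2}=\tfrac12\bigl(\tww(\overline{C}_i)^2+\tww(\overline{C}_i)\bigr)$: starting from a width-$w$ contraction sequence of $\overline{C}_i$, one simulates each apex $v_S$ not by a single blob but by the unordered pair of blobs currently holding the endpoints of the missing edges inside $S$, merging any two apexes in the same state; since there are at most $\binom{w+1}{2}$ such pairs at any step, the resulting sequence of $\widehat{C}_i$ has width within the claimed bound. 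Substituting into part~(1) gives $\tww(G)\le 8\binom{\tww(\overline{C}_i)+1}{2}+14=4\bigl(\tww(\overline{C}_i)^2+\tww(\overline{C}_i)\bigr)+14$, again with the fallback $70$.

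The step I expect to be hardest is the quadratic bound in part~(2): making the ``one apex behaves like a pair of boundary blobs'' simulation precise, in particular controlling the interaction of the missing $3$-separator edges across the possibly many apexes that share a vertex of $C_i$, and verifying that the blow-up is only the factor $\binom{w+1}{2}$ rather than something larger. By contrast, once the adhesion-$2$ interleaving behind Theorem~\ref{Theorem: Bound on twin-width by twin-width of triconnected components} is in hand, the adhesion-$3$ bookkeeping for part~(1) is routine, with the changed constants merely absorbing the step from $2$- to $3$-element adhesion sets.
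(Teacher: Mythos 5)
Your high-level route (use Grohe's adhesion-$3$ tree decomposition, sweep the tree, run contraction sequences of the marked components, and reduce the torso version to the apex version with a quadratic loss) is the same skeleton as the paper's, but both halves have genuine gaps at exactly the points where the paper does its real work. In part~(1), the claim that every finished subtree can ``appear as a single blob hanging off the currently active component'' with only an $O(1)$ contribution is where the argument breaks: a vertex of a $3$-separator can lie in unboundedly many parts, so if each finished child subtree is contracted to one blob, that blob is in general \emph{not} homogeneously attached to its separator, and a single separator vertex can accumulate one red edge per finished child, i.e.\ unbounded red degree. The paper avoids this by contracting each subtree with a sequence that \emph{respects} its parent separator (no red edge ever touches a separator vertex), which leaves up to $2^3=8$ homogeneously attached classes per child rather than one blob; converting an optimal sequence of $\widehat{C}_i$ into such a respecting sequence is exactly what costs the multiplicative factor $8=2^3$ and the additive $14=2^{4}-2$, via the apex theorem of~\cite{tww1a, tww1b} (Theorem~\ref{theorem: Adding one apex keeps twin-width bounded} and Corollary~\ref{Corollary: Bound of pointed twin-width by twin-width}), and then the $N_{c_i}$-gadgets must be merged down carefully (Lemmas~\ref{Lemma: Bound twin-width by twin-width of blocks} and~\ref{Lemma: Bound twin-width by twin-width of blocks, simpler gadgets}). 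Your own accounting (``red degree at most $\tww(\widehat{C}_i)$ plus $O(1)$'') would, if correct, prove an additive bound stronger than the theorem; the factor $8$ is not a ``constant inflated to account for $|S|=3$'' and nothing in your sketch explains it, which is a sign that the propagation of red edges through shared separator vertices has not been controlled.

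In part~(2), two issues. First, a misreading: the quasi-$4$-connected components are torsos, so every split $3$-separator is already a triangle in $C_i$ and $\overline{C}_i$ carries a full red triangle on it; there are no ``missing edges'' inside $S$, and the relevant object is the red torso as in Lemma~\ref{Lemma: Bound twin-width by twin-width of blocks, torso version}. Second, and more importantly, the inequality $\tww(\widehat{C}_i)\leq\binom{\tww(\overline{C}_i)+1}{2}$ is asserted via a vague ``simulate each apex by a pair of blobs'' device, with no argument for why apexes ``in the same state'' can be merged within the width bound, why the state space is pairs, or why at most $\binom{w+1}{2}$ apexes can be red-adjacent to a fixed vertex at any time. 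The paper's proof of the corresponding bound follows the torso's contraction sequence, maintains the invariants that each apex neighborhood stays a red clique of size at most $k+1$ and that these neighborhoods form an antichain (merging apexes whose neighborhoods become nested), and then bounds the number of apexes red-adjacent to a fixed vertex by an LYM/Sperner count over its red neighborhood (Lemma~\ref{lemma: Sperner's theorem} and Corollary~\ref{corollary: bound red degree in hypergraph by red degree in torso}), yielding $\tww(\widehat{C}_i)\leq\max\bigl(4,\,\tww(\overline{C}_i)+\tbinom{\tww(\overline{C}_i)}{2},\,\tww(\overline{C}_i)+3\bigr)$, which plugged into part~(1) gives $4(\tww(\overline{C}_i)^2+\tww(\overline{C}_i))+14$. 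Your target intermediate bound numerically coincides with this, but the antichain/counting argument that makes it true --- which you yourself flag as the hardest step --- is missing, so as written the proposal does not establish either claimed inequality.
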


For the general case of tree decompositions of bounded adhesion, we get the following:
\begin{restatable}{theorem}{BoundedAdhesiontww}\label{Theorem: Bound twin-width by twin-width of blocks, hat + torso version}
For every \(k \in \mathbb{N}\) there exist explicit constants \(D_k\) and \(D_k'\) such that
for every graph \(G\) with a tree decomposition of adhesion \(k\) and parts \(P_1,P_2,\dots,P_\ell\),
the following statements are satisfied:
\begin{enumerate}
\item For each \(P_i\), we construct a trigraph \(\widehat{P}_i\) by adding for each adhesion set \(S\) in \(P_i\)
	a new vertex \(v_S\) which we connect via red edges to all vertices in \(S\).
	Then
	\[\tww(G)\leq 2^k\max_{i\in[\ell]}\tww(\widehat{P}_i)+D_k.\]
\item Assume \(k\geq 3\). For each \(P_i\), we construct the torso \(\overline{P}_i\)
	by turning every adhesion set in \(P_i\) into a red clique. Then
	\[\tww(G)\leq \frac{2^k}{(k-1)!}\max_{i\in[\ell]} \tww(\overline{P}_i)^{k-1}+D_k'.\]
\end{enumerate}
\end{restatable}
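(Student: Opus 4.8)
The plan is to fix a tree decomposition of $G$ of adhesion $k$, process it from the leaves towards the root, contract each part by a twin-width-optimal contraction sequence of its modified version ($\widehat P_i$ in part (1), $\overline P_i$ in part (2)), and glue these local sequences along the tree; this is the same strategy as for Theorem~\ref{Theorem: Bound on twin-width by twin-width of triconnected components}, now with adhesion sets of size up to $k$ rather than $2$, which is what forces the dependence on $k$ in the constants $D_k, D_k'$ and in the multiplicative factors. Throughout I use that twin-width does not increase when passing to an induced subtrigraph or when deleting red edges, so that local trigraphs slightly ``smaller'' than $\widehat P_i$ or $\overline P_i$ are harmless.

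Fix such a decomposition $(T, \beta)$ rooted at an arbitrary node, and for a node $t$ write $S_t = \beta(t) \cap \beta(\mathrm{parent}(t))$ for the adhesion set towards the root, so $|S_t| \le k$, $S_t$ lies in both $\beta(t)$ and its parent bag, and $S_t$ separates the vertices strictly below $t$ from the rest of $G$. The core local step is a \emph{partial contraction lemma}: given a trigraph $H$ (which will be $\widehat P_t$ or $\overline P_t$), a set $A \subseteq V(H)$ with $|A| \le k$ playing the role of $S_t$, and a width-$d$ contraction sequence of $H$, I construct a partial contraction sequence that keeps every vertex of $A$ as a singleton, contracts $V(H) \setminus A$ into a single red part whose red neighbourhood is contained in $A$, and has width at most $d + |A|$. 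To do so, follow the given sequence, but whenever it merges a part meeting $A$ with another part, only merge their portions inside $V(H) \setminus A$; the partition reached after each step is then exactly the one prescribed by the sequence of $H$ with the (at most $k$) vertices of $A$ split off as singletons, and from this description every red degree is seen to exceed the corresponding one in the sequence of $H$ by at most $|A|$.

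Now assemble a global contraction sequence by traversing $T$ so that a node is contracted only once all of its subtrees are done. Processing a non-root node $t$ means running the partial contraction lemma on $\widehat P_t$ (resp.\ $\overline P_t$) with $A = S_t$, where every child placeholder $v_{S_c}$ (resp.\ each child adhesion clique) has already been replaced by the single red blob $w_c$ produced when its subtree was processed; a short check shows that the current trigraph in the vicinity of $t$ is then exactly $\widehat P_t$ with these placeholders realized, so the lemma applies, and its output $w_t$ is a single red part seen by $\mathrm{parent}(t)$ just as $v_{S_t}$ (resp.\ a vertex of the red clique on $S_t$) would be; the root is finished by an ordinary contraction sequence. \textbf{The main obstacle} is keeping the red degree bounded during this gluing: a vertex $s$ lies in $\beta(t)$ for every $t$ in a whole subtree of $T$, hence in many adhesion sets, so it can pick up a red edge to the blob of each such $t$, and there is no obvious traversal order under which all of these blobs are absorbed at the same time. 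Overcoming this requires a carefully chosen order together with a charging argument bounding, at any moment, the number of active blobs incident to each reserved vertex by a function of $k$ only; I expect the factor $2^k$ in part (1) to stem from the fact that each of the up to $k$ reserved vertices of an adhesion set can, in the worst case, double the number of parts that must be tracked in its vicinity, with the remaining additive losses collected into $D_k$.

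For part (2) the skeleton is identical, but the torso encoding provides no placeholder vertex, so a child blob $w_c$ must be merged directly into the red clique on $S_c$ inside $\overline P_t$. Since an adhesion set of size $k$ appears there as a red $K_k$, pushing a contraction through it is more wasteful than routing it through a single placeholder vertex: a vertex of such a clique effectively tracks up to $\tww(\overline P_t)$ possibilities for each of the remaining $k-1$ clique vertices, which produces the factor $\tww(\overline P_t)^{k-1}$, while the $\frac{2^k}{(k-1)!}$ in front comes from the same doubling argument as in part (1) together with a count over the $(k-1)$-subsets of an adhesion set; all remaining bookkeeping is absorbed into $D_k'$. The hypothesis $k \ge 3$ in part (2) only excludes the cases already handled by the biconnected and triconnected bounds.
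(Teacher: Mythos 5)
Your local ``partial contraction lemma'' is fine as far as it goes (splitting the at most $k$ vertices of $A$ off as singletons costs at most $|A|$ in red degree, exactly as in inequality~\eqref{eq: tww w r t one merge} and Lemma~\ref{lemma: contractions not using a vertex}), but the object it produces is the wrong one, and the point you flag as ``the main obstacle'' is not a bookkeeping issue you can defer to an unspecified charging argument --- it is the crux of the theorem, and your blob construction cannot get past it. If each processed subtree below a child $c$ is contracted to a single part $w_c$ with \emph{red} edges into $S_c$, then a vertex $s$ of a bag $B_t$ accumulates one red edge for every child separator of $t$ containing $s$, and all of these blobs must coexist before $P_t$ is touched (your own traversal processes a node only after all its subtrees are finished). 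Merging blobs whose separators coincide only reduces this to one blob per \emph{distinct} separator through $s$, and since the theorem bounds the adhesion but not the width, the number of distinct $k$-element child separators containing a fixed $s$ inside one bag is unbounded; so is the number along the path above $s$. Hence no traversal order keeps the red degree of separator vertices bounded in terms of $k$ and $\max_i\tww(\widehat P_i)$, and the proof collapses exactly where you wrote ``I expect''.

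The paper's resolution is qualitatively different and is the missing idea: one works with contraction sequences \emph{respecting} the separator, i.e.\ sequences that never place any red edge on a separator vertex at all, at the price of leaving up to $2^{|S_c|}$ residual classes per child (one per neighbourhood type in $S_c$) instead of a single blob. These residual classes are what the intermediate gadget graphs $\widetilde P_t$ model, and the recursion of Lemma~\ref{Lemma: Bound twin-width by twin-width of blocks} glues them without ever touching separator vertices with red. The factor $2^k$ in part~(1) then comes not from a ``doubling of tracked parts'' heuristic but from the apex theorem (Theorem~\ref{theorem: Adding one apex keeps twin-width bounded}) iterated over the $k$ separator vertices, i.e.\ $\tww(H,A)\leq 2^{|A|}\tww(H)+2^{|A|+1}-2$ (Corollary~\ref{Corollary: Bound of pointed twin-width by twin-width}), combined with a careful contraction of each $2^k$-vertex gadget to the single vertex $v_S$ (Lemma~\ref{Lemma: Bound twin-width by twin-width of blocks, simpler gadgets}). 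Likewise, in part~(2) the exponent $k-1$ is not obtained by ``tracking possibilities'' at a clique vertex: the paper simulates a contraction sequence of $\overline P_t$ inside $\widehat P_t$ and bounds the number of gadget vertices $v_S$ red-adjacent to a fixed vertex by an LYM/Sperner antichain argument (Corollary~\ref{corollary: bound red degree in hypergraph by red degree in torso}), giving the $\binom{\tww(\overline P_t)}{k-1}$ term; your proposal contains no argument for this bound. So the overall leaf-to-root strategy matches the paper, but the two essential ingredients --- separator-respecting contractions with $2^{|S|}$ residual classes plus the apex bound, and the Sperner bound for the torso version --- are absent, and the construction you do give would fail on graphs of bounded adhesion and large bags.
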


In \cite{bounding-twwa} the authors gave an exponential bound for the twin-width of a graph in terms of its tree-width.
We refine their result by separating the dependence of this bound on the width and on the adhesion of the tree-decomposition.
Remarkably, while our bound is still exponential in the adhesion of the tree-decomposition, it is only linear in its width.
\begin{restatable}{theorem}{Twwvsadhesion} \label{thm:  widthadhesion}
	Let \(G\) be a graph with a tree decomposition of width~\(w\) and adhesion~\(k\).
	Then
	\[\tww(G)\leq 3\cdot 2^{k-1}+\max(w-k-1,0).\]
\end{restatable}

\subparagraph*{Bounding the red degree of decomposition trees.}
The underlying structure of all the decompositions that we consider in this paper is a tree.
We generalize the optimal contraction sequence for trees \cite{tww1a} which works as follows: choose a root for the tree. If possible, choose two sibling leaves and contract them (which implies a red edge from the new vertex to its parent). Whenever a parent is adjacent to two of its leaf-children via red edges, these two children are merged. This ensures that the red degree of any parent throughout the whole sequence never exceeds~2. 
If there are no sibling leaves, then choose a leaf of highest distance to the root and contract it with its parent. This yields a red edge between the new merged vertex and the former grandparent. Repeat this until we end up with a singleton.
We preserve this idea in our proofs to ensure that at no point in time three distinct bag-siblings contribute to the red degree of the vertices in their parent bag.

\subparagraph*{Further related work.}
A standard reference on tree-width is~\cite{Bod98}.
For the basics on graph connectivity and decomposition we refer text books on graph theory such as~\cite{West01}.
The twin-width of a graph given the twin-width of its modular decomposition factors
already has been investigated in~\cite{tww_polynomial_kernels, SAT_approach}.
In particular, the twin-width of a lexicographical product is just the maximal twin-width of its factors.
In contrast to the linear-time solvable tree-width decision problem~\cite{Bod96} (for a fixed $k$: is the tree-width of the input graph at most $k$?), deciding whether the twin-width of a graph is at most~4 is already NP-complete~\cite{twwleq4NPharda}.

\subparagraph*{Organization of the paper.}
We provide the preliminaries in Section~\ref{sec:prelims}.
Our results on strong tree-width can be found in Section~\ref{sec: stw}.
In Section~\ref{sec: small seps} we prove new bounds on the twin-width of a graph given the twin-widths of its highly connected components, and, we generalize our approach to graphs which allow for a tree-decomposition of small adhesion.

\section{Preliminaries}\label{sec:prelims}
For a natural number \(n\), we denote by \([n]\) the \(n\)-element set \(\{1,\dots,n\}\).
For a set \(A\), we write~\(\mathcal{P}(A)\) for the power set of \(A\).
For a natural number \(k\leq |A|\), we write \(\binom{A}{k}\) for the set of \(k\)-element subsets of \(A\).
 
\subparagraph*{Graphs and trigraphs.}
All graphs in this paper are finite, undirected and contain no loops.
For a graph \(G\), we denote its vertex set by \(V(G)\)
and its edge set by \(E(G)\). We write \(|G|\coloneqq|V(G)|\) for the order of \(G\).

A \emph{trigraph} is an edge-colored graph~\(G\) with disjoint sets
\(E(G)\) of \emph{black edges} and \(R(G)\) of \emph{red edges}. We can interpret
every graph as a trigraph by setting~\(R(G)=\emptyset\).
For a vertex subset $A$ of a trigraph~$G$, we denote by~\(G[A]\) the \emph{subgraph induced on \(A\)} and by \(G-A\) the subgraph
induced on \(V(G)\setminus A\). For a vertex~\(v\in V(G)\), we also write
\(G-v\) instead of \(G-\{v\}\).
If~\(G\) is a graph, the \emph{neighborhood}
and \emph{degree} of a vertex~\(v\in V(G)\)
is denoted by~\(N_G(v)\) and~\(d_G(v)\) (or \(N(v)\) and \(d(v)\) if \(G\) is clear from context) respectively.
For trigraphs, we write \(\rdeg_G(v)\) for the \emph{red degree} of~\(v\), i.e., the
degree of~\(v\) in the graph \((V(G),R(G))\).
We write \(\Delta(G)\) or \(\Delta_{\red}(G)\) for the maximum (red) degree of a (tri-)graph \(G\).
For a set of vertices \(A\subseteq V(G)\), we write \(N[A]\coloneqq A\cup\bigcup_{v\in A}N(v)\)
for the \emph{closed neighborhood of \(A\)}.

A \emph{multigraph} is a graph where we allow multiple edges between each pair of vertices.

\subparagraph*{Twin-width.}
Let \(G\) be a trigraph and \(x,y\in V(G)\) two distinct, not necessarily adjacent vertices of \(G\).
We \emph{contract \(x\) and \(y\)} by merging the two vertices to a common vertex \(z\),
leaving all edges not incident to~\(x\) or~\(y\) unchanged,
connecting \(z\) via a black edge to all common black neighbors of \(x\) and \(y\),
and via a red edge to all red neighbors of \(x\) or \(y\) and to all vertices
which are connected to precisely one of \(x\) and \(y\). We denote the resulting trigraph
by \(G/{xy}\).
A \emph{partial contraction sequence} of \(G\) is a sequence of trigraphs \((G_i)_{i\in[k]}\)
where \(G_1=G\) and~\(G_{i+1}\) can be obtained from~\(G_i\) by contracting two distinct vertices
\(x_i,y_i\in V(G_i)\). By abuse of notation, we also call the sequence \((x_iy_i)_{i<k}\) of contraction pairs a partial contraction sequence.
The width of a partial contraction sequence is the maximal red degree
of all graphs~\(G_1,\dots,G_k\). If the width of a sequence is at most \(d\), we call it a \emph{\(d\)-contraction sequence}.
A \emph{(complete) contraction sequence} is a partial contraction sequence whose final trigraph
is the singleton graph on one vertex.
The minimum width over all complete contraction sequences of~\(G\) is called the \emph{twin-width} of \(G\)
and is denoted by \(\tww(G)\).
We often identify a vertex~\(v\in V(G)\) with the vertices in the graphs \(G_i\) that~\(v\)
gets contracted to and sets of vertices with the sets of vertices they get contracted to.

Twin-width has many nice structural properties. For example, it is monotone with respect to induced subgraphs: for every induced subgraph \(H\subseteq G\) it holds that \(\tww(H)\leq\tww(G)\).
Moreover, the twin-width of a disconnected graph is the maximum twin-width of its connected components.

\subparagraph*{Tree decompositions and tree-width.}
Given a graph \(G\), a \emph{tree decomposition} of \(G\) is a pair \(\mathcal{T}=(T, \{B_i\colon i \in V(T)\})\) where \(T\) is a tree and
 \(\{B_i\colon i \in V(T)\}\) is a family of subsets of~\(V(G)\), one for each node of $T$, satisfying the following conditions:
\begin{enumerate}
	\item $V(G) = \bigcup_{i \in V(T)}B_i $,
	\item for every vertex \(v\) of $G$ the graph $T[\{i \in V(T)\colon v \in B_i\}]$
	is connected,
	\item for every edge $uv$ of $G$ there exists an $i \in V(T)$ with $\{u,v\} \subseteq B_i$.
\end{enumerate}
A set $B_i$ is a \emph{bag} of $\mathcal{T}$ and a subgraph of the form \(G[B_i]\) is a \emph{part} of $\mathcal{T}$.
The sets of the form \(B_i\cap B_j\) for edges \(ij\in E(T)\) are the \emph{adhesion sets} or \emph{separators} of \(\mathcal{T}\) and the maximal size of an adhesion set is the \emph{adhesion} of~\(\mathcal{T}\).
The graph obtained from a part~\(G[B_i]\) by completing all adhesion sets \(B_i\cap B_j\)
to cliques is the \emph{torso} of \(G[B_i]\).
The width of a tree-decomposition is \(\max_{i\in V(T)}|B_i|-1\)
and the minimum width over all tree decompositions of~\(G\) is the \emph{tree-width} of \(G\)
and is denoted by \(\tw(G)\).

\subparagraph*{Strong tree-width.}
\emph{Strong tree-width} or \emph{tree-partition width} is a graph parameter independently introduced by \cite{stwSeese} and \cite{stwHalin}.
A \emph{strong tree decomposition} of a graph~$G$ is a pair $(T, \{B_i\colon i \in V(T)\})$ where~$T$ is a tree and $\{B_i\colon i \in V(T)\}$ is a partition of~$V(G)$ with one part for each node of $T$ such that
for every edge $uv$ of $G$ there either exists an $i \in V(T)$ such that $\{u,v\} \subseteq B_i$ or there exist two adjacent nodes $i$ and $j$ in $T$ with~$u \in B_i$ and~$v \in B_j$.
The sets $B_i$ are called \emph{bags} and
$\max_{i \in V(T)}|B_i|$ is the \emph{width} of the decomposition.
The minimum width over all strong tree decompositions of~$G$ is the \emph{strong tree-width} \(\stw(G)\) of~$G$.

\begin{remark}[(Strong) tree-width and twin-width]
	The tree-width of a graph is bounded in its strong tree-width via \(\tw(G)\leq 2\stw(G)-1\), see \cite{stwSeese}.
	In the other direction, there is no bound: the strong-tree width of a graph is unbounded in its tree-width~\cite{bodlaender_domino_tw}.
	However, it holds that \(\stw(G)\in O(\Delta(G)\cdot\tw(G))\), see~\cite{tw_vs_stw}. Thus, for graphs of bounded degree, the two width
	notions are linearly equivalent.
	In general, the strong tree-width is unbounded in the twin-width of a graph.
	For example, consider a complete graph on $2n$ vertices.
	A width-minimal strong tree decomposition of this graph has two bags, each containing~$n$ vertices.
	However the twin-width of a complete graph is 0.
\end{remark}

\subparagraph*{Separators and highly connected components.}
A \emph{(\(k\)-)separator} of a graph \(G\) is a set \(S\subseteq V(G)\) of vertices
(with \(|S|=k\)) such that \(G-S\) contains more connected components
than \(G\). A \emph{cut vertex} is a vertex \(v\) for which \(\{v\}\) is a \(1\)-separator.
If \(G\) is connected, has order at least \(k+1\) and contains no \(\ell\)-separator for 
\(\ell<k\), then \(G\) is called \emph{\(k\)-connected}.

If \(S\) is a separator of \(G\), and \(C_1,\dots,C_\ell\) are the components
of \(G-S\) that are adjacent to \(S\), the graphs \(G[C_i\cup S]\)
are called the parts obtained by splitting \(G\) at \(S\).
If we complete \(S\) to a clique in these parts, we also call them
the torsos obtained by splitting \(G\) at \(S\).

If a graph is not \(k\)-connected,
we can split it along a small separator to obtain smaller torsos
with possibly higher connectivity.

If we split a graph \(G\) at all cut vertices,
the resulting parts are the maximal connected subgraphs of~$G$ that have no cut vertices,
and are called the \emph{biconnected components} of~$G$.
The \emph{block-cut-forest} of~\(G\) is a bipartite graph where one part is the set of biconnected components of $G$ and the other part is the set of cut vertices of $G$ and a biconnected component is adjacent to a cut vertex precisely if the vertex is contained in the component.
This graph is a forest, and even a tree if \(G\) is connected \cite{West01}.
In the latter case we also speak of the \emph{block-cut-tree} of $G$.
In terms of tree decomposition this can be rephrased as follows:
Every connected graph~\(G\) of order at least~2 has a unique tree decomposition~\(\mathcal{T}\)  of adhesion at most \(1\) such that every
part of~$\mathcal{T}$ is either \(2\)-connected or a complete graph of order \(2\).

When splitting along \(2\)-separators, we also get a tree-shaped decomposition (see~\cite{triconnected}):
For every \(2\)-connected graph, there exists a tree decomposition~\(\mathcal{T}\) of~\(G\) such that~\(\mathcal{T}\) has adhesion at most~\(2\), and the torso
of every bag is either \(3\)-connected, a cycle, or a complete graph of order \(2\). Moreover, the set of bags of this tree decomposition is isomorphism-invariant.

The \emph{triconnected components of \(G\)} are multigraphs constructed from the torsos of this tree decomposition.
In this work, these multigraphs are not important and we also call the torsos themselves \emph{triconnected components}.

A similarly clean decomposition into \(4\)-connected components arranged in a tree-like fashion does not exist \cite{Gro16a}.
This motivated Grohe to introduce the notion of quasi-4-connectivity \cite{Gro16a}: A graph \(G\) is \emph{quasi-4-connected} if it is \(3\)-connected
and all \(3\)-separators split off at most a single vertex. That is, for every separator \(S\) of size \(3\),
the graph \(G-S\) splits into exactly two connected components, at least one of which consists of a single vertex.
The prime example of quasi-4-connected graphs which are not 4-connected are hexagonal grids.
Also for quasi-4-connectivity there is a tree-like decomposition into components:
\begin{theorem}[{\cite{Gro16a}}]
	For every \(3\)-connected graph \(G\), there exists a tree decomposition~\(\mathcal{T}\) of~\(G\) such that \(\mathcal{T}\) has adhesion at most \(3\), and the torso
	of every bag is either quasi-4-connected or of size at most \(4\).
\end{theorem}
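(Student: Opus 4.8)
The plan is a direct induction on $|G|$: split $G$ along one carefully chosen $3$-separation and recurse on the two torsos. For the base case, if $|G|\le 4$ or $G$ is already quasi-4-connected, take the tree decomposition consisting of a single bag $V(G)$; it has adhesion $0$ and its unique torso is $G$ itself, so the statement holds. Hence assume from now on that $G$ is $3$-connected, $|G|\ge 5$, and not quasi-4-connected.

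By the definition of quasi-4-connectivity there is a $3$-separator $S$ such that $G-S$ has at least three components, or has exactly two components each of order at least $2$. In either case $G-S$ has a component $C$ with $V(G)\setminus(S\cup C)\ne\emptyset$, so putting $A:=S\cup C$ and $B:=V(G)\setminus C$ we obtain a separation $(A,B)$ of $G$ with $A\cap B=S$, with no edges between $A\setminus B=C$ and $B\setminus A$, and with $4\le|A|,|B|<|G|$. Let $H_A$ and $H_B$ be the corresponding torsos, that is, $G[A]$ and $G[B]$ with $S$ completed to a triangle. The crucial point is that \emph{$H_A$ and $H_B$ are again $3$-connected}: if some set $T$ with $|T|\le 2$ disconnected $H_A$, then, since $S\setminus T$ stays inside one component of $H_A-T$ and all edges added to $G[A]$ to form $H_A$ lie inside $S$, some component $M$ of $G[A]-T$ avoids $S$ entirely; then $M\subseteq A\setminus B$, so $N_G(M)\subseteq T$, and $T$ is a separator of $G$ of size at most $2$ -- contradicting $3$-connectivity. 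The same argument applies to $H_B$. Thus the induction hypothesis applies to both $H_A$ and $H_B$ (both are $3$-connected of order strictly less than $|G|$).

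By induction, let $\mathcal T_A$ and $\mathcal T_B$ be tree decompositions of $H_A$ and $H_B$ of adhesion at most $3$ all of whose torsos are quasi-4-connected or of order at most $4$. Since $S$ induces a clique in each of $H_A$ and $H_B$, there are bags $b_A$ of $\mathcal T_A$ and $b_B$ of $\mathcal T_B$ with $S\subseteq b_A$ and $S\subseteq b_B$. Glue $\mathcal T_A$ and $\mathcal T_B$ into a tree $\mathcal T$ by adding a single edge between $b_A$ and $b_B$. Then $\mathcal T$ is a tree decomposition of $G$: each vertex of $S$ occupies a subtree of $\mathcal T_A$ through $b_A$ and a subtree of $\mathcal T_B$ through $b_B$, so its bags form a subtree of $\mathcal T$; every other vertex lies in only one of the two sides and is unaffected; and every edge of $G$ lies inside $A$ or inside $B$. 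The new adhesion set $b_A\cap b_B$ is contained in $A\cap B=S$ and contains $S$, so it equals $S$, and the adhesion of $\mathcal T$ is still at most $3$. Finally, the torsos of $\mathcal T$ coincide with those of $\mathcal T_A$ and $\mathcal T_B$: in $\mathcal T$, a bag of $\mathcal T_A$ gains at most the clique on its intersection with $S$, but for $b_A$ that clique is already present in $H_A$, and for every other bag of $\mathcal T_A$ the vertices of $S$ it contains lie on the path to $b_A$ and are therefore already completed to a clique by the adhesion sets along that path; the situation for $\mathcal T_B$ is symmetric. Hence every torso of $\mathcal T$ is quasi-4-connected or has order at most $4$, and the induction is complete.

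The main obstacle is the claim that the torsos of a $3$-separation of a $3$-connected graph are themselves $3$-connected: this is exactly what lets the recursion run, and it is the reason adhesion sets must be completed to cliques. The remaining delicate point is the torso bookkeeping in the gluing step, which relies on the separator $S$ being contained in the glue bags and hence ``guarded'' throughout both subtrees; termination is immediate since $|H_A|,|H_B|<|G|$. Note that this argument yields only existence; the isomorphism-invariant decomposition of~\cite{Gro16a, Gro16b} additionally requires analysing how distinct essential $3$-separations cross, typically via the tangles of order $4$ of $G$.
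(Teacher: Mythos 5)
Your argument is a correct proof of the statement as quoted, but note that the paper itself does not prove this theorem: it is imported from Grohe~\cite{Gro16a, Gro16b}, so the only meaningful comparison is with Grohe's original argument. Your induction -- pick a $3$-separator witnessing failure of quasi-4-connectivity, split into $A=S\cup C$ and $B=V(G)\setminus C$, observe that completing $S$ to a triangle keeps both torsos $3$-connected (your argument for this is correct: a component of $H_A-T$ avoiding the clique $S$ is untouched by the virtual edges and yields a $\leq 2$-separator of $G$), recurse, and glue the two decompositions along bags containing the clique $S$ -- is sound, and the delicate part is handled properly: the new adhesion set equals $S$, and the torsos of the glued decomposition coincide with those of $\mathcal{T}_A$ and $\mathcal{T}_B$ because the virtual triangle edges inside any bag are recovered either from the new adhesion set at $b_A$ or from the adhesion set of the first tree edge on the path towards $b_A$. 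Termination by $|H_A|,|H_B|<|G|$ is fine. What this elementary recursion does \emph{not} give -- and what constitutes the bulk of Grohe's work -- is canonicity: your splitting order is arbitrary, and later separators are separators of torsos (possibly using virtual edges), so the resulting set of bags is not isomorphism-invariant and need not reflect the quasi-4-connected structure of $G$ in any intrinsic way; Grohe obtains a canonical decomposition by analysing how $3$-separations cross, using tangles of order $4$, which you correctly flag as beyond your argument. Since the present paper only invokes the existence of \emph{some} adhesion-$3$ tree decomposition whose torsos are quasi-4-connected or of size at most $4$ (it feeds these torsos into Lemmas~\ref{Lemma: Bound twin-width by twin-width of blocks, simpler gadgets} and~\ref{Lemma: Bound twin-width by twin-width of blocks, torso version}), your proof suffices for the way the theorem is used here.
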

The torsos of this tree decomposition are called \emph{quasi-4-connected components of \(G\)}. In contrast to the case of connected, biconnected or triconnected components,
the quasi-4-connected components of a graph are not canonical,
meaning that there might exist multiple such tree decompositions with distinct
sets of bags. Nonetheless, there do exist canonical decompositions of \(3\)-connected graphs along small separators into torsos which are either quasi-4-connected or one of a few exceptional graphs~\cite{quasi4_canonical}. However, these parts do not come in the form
of a tree-decomposition, but in the form of a so-called \emph{mixed-tree-decomposition}~\cite{quasi4_canonical}.

\subparagraph*{Paley graphs.}
For a prime power \(q\) with \(q\equiv 1\pmod{4}\), the \emph{Paley graph} \(P(q)\)
is the graph whose vertex set is the \(q\)-element field \(\mathbb{F}_q\),
where two vertices \(x,y\in\mathbb{F}_q\) are adjacent in \(P(q)\)
if and only if~\(x-y\) is a non-zero square in \(\mathbb{F}_q\).
Since \(-1\) is a square in every finite field~\(\mathbb{F}_q\) with \(q\equiv 1\pmod 4\),
the Paley graphs are well-defined undirected graphs.
Paley graphs are strongly regular
with parameters \((q,\frac{q-1}{2},\frac{q-5}{4},\frac{q-1}{4})\), that is,
\(P(q)\) has order~\(q\), is~\(\frac{q-1}{2}\)-regular, every two adjacent vertices share exactly \(\frac{q-5}{4}\)
common neighbors, and every two non-adjacent vertices share exactly \(\frac{q-1}{4}\) common neighbors.
In~\cite{twwpaley}, the twin-width of the Paley graph \(P(q)\) was determined to be $\frac{q-1}{2}$.
The first four Paley graphs are depicted in Figure~\ref{fig:paley}.

\begin{figure}
	\centering
		\begin{tikzpicture}[scale=1.3]
		\def\krad{.9cm}
		\begin{scope}
			\def\vxnumber{5}
			\def\angle{360/5}
			\foreach \i in {1,...,\vxnumber}{
				\node[svertex] (\i) at (270-\angle/2+\i*\angle:\krad) {};
			}
			\draw (1)--(2)--(3)--(4)--(5)--(1);
		\end{scope}
		\begin{scope}[shift={(2.7,0)}]
			\def\vxnumber{9}
			\def\angle{360/9}
			\foreach \i in {1,...,\vxnumber}{
				\node[svertex] (\i) at (270-\angle/2+\i*\angle:\krad) {};
			}
			\draw (1)--(2)--(3)--(4)--(5)--(6)--(7)--(8)--(9)--(1)
			(1)--(3) (4)--(6) (7)--(9)
			(7)--(2)--(6)
			(9)--(5)--(1)
			(3)--(8)--(4)
			;
		\end{scope}
		\begin{scope}[shift={(5.4,0)}]
			\def\vxnumber{13}
			\def\angle{360/13}
			\foreach \i in {1,...,\vxnumber}{
				\node[svertex] (\i) at (270-\angle/2+\i*\angle:\krad) {};
			}
			\draw (1)--(2)--(3)--(4)--(5)--(6)--(7)--(8)--(9)--(10)--(11)--(12)--(13)--(1)
			(1)--(4)--(7)--(10)--(13)--(3)--(6)--(9)--(12)--(2)--(5)--(8)--(11)--(1)
			(1)--(5)--(9)--(13)--(4)--(8)--(12)--(3)--(7)--(11)--(2)--(6)--(10)--(1)
			;
		\end{scope}
		\begin{scope}[shift={(8.1,0)}]
			\def\vxnumber{17}
			\def\angle{360/17}
			\foreach \i in {1,...,\vxnumber}{
				\node[svertex] (\i) at (270-\angle/2+\i*\angle:\krad) {};
			}
			\draw (1)--(2)--(3)--(4)--(5)--(6)--(7)--(8)--(9)--(10)--(11)--(12)--(13)--(14)--(15)--(16)--(17)--(1)
			(1)--(3)--(5)--(7)--(9)--(11)--(13)--(15)--(17)--(2)--(4)--(6)--(8)--(10)--(12)--(14)--(16)--(1)
			(1)--(5)--(9)--(13)--(17)--(4)--(8)--(12)--(16)--(3)--(7)--(11)--(15)--(2)--(6)--(10)--(14)--(1)
			(1)--(9)--(17)--(8)--(16)--(7)--(15)--(6)--(14)--(5)--(13)--(4)--(12)--(3)--(11)--(2)--(10)--(1)
			;
		\end{scope}
	\end{tikzpicture}
\caption{The first four Paley graphs: $P(5), P(9), P(13)$, and $P(17)$}
\label{fig:paley}
\end{figure}

\section{Twin-width of graphs of bounded strong tree-width} \label{sec: stw}
In this section we are investigating the relationship between the twin-width and the strong tree-width of a graph.

For a graph $H$ and a vertex subset $U \subseteq V(H)$ a partial contraction sequence~$s$ of $H$ is a \emph{$U$-contraction sequence} if only vertices of $U$ are involved in the contractions in~$s$ and~$s$ is of maximal length with this property, that is, performing all contractions of $s$ yields a partition of $V(H)$ where $U$ forms one part and the rest of the parts are singletons.
We denote the minimum width over all $U$-contraction sequences of~$H$ by $\tww_U(H)$.

\Stwvstww*
\begin{proof}
	Let $\mathcal{T} = (T, \{B_i\colon i \in V(T)\})$ be a strong tree decomposition of $G$ of width $k$.
	Fix~$r \in V(T)$ and consider $T$ to be a rooted tree with root $r$ from now on.
	If a bag $B_i$ contains only one vertex $v$, then we set $v_{i} \coloneqq v$.
	A node $p$ of $T$ is a \emph{leaf-parent} if at least one of its children is a leaf.
	If~$B_i$ is a bag of $\mathcal{T}$, then \emph{contracting $B_i$} means to apply a width-minimal $B_i$-contraction sequence.
	After a contraction of two vertices~$u$ and~$v$ to a new vertex~$x$ we \emph{update} the strong tree decomposition~$\mathcal{T}$, that is,
	 if~$u$ and~$v$ were contained in the same bag, then we simply replace~$u$ and~$v$ by~$x$.
	If, otherwise, $u$ and $v$ were contained in different bags, then we remove~$u$ and~$v$ from their bags and insert~$x$ to one of the two bags that is closest to the root.
	If this causes an empty bag, then we remove the bag as well as the corresponding tree node.
	Observe that updating does not increase the width of any bag. In the following, we only perform
	contractions within a bag, between the only vertex in a leaf bag and a vertex in its parent bag,
	or between the only vertices in two sibling leaf bags.
	Hence, the updated strong tree decomposition always remains a valid strong tree decomposition of the contracted graph.
	\begin{algorithm*} 
		\caption{\textsc{Contract}($G$, $\mathcal{T}=(T, \{B_i, i \in V(T)\})$)}
		label every node of $T$ as unmerged\\
		\While{$|V(T)| \geq 2$}{
			\If{\(\mathcal{T}\) contains two merged sibling leaves \(\ell_1,\ell_2\),}
				{contract $v_{\ell_1}$ with $v_{\ell_2}$ (to $v_{\ell_1}$),\label{line: contract siblings}\\
				update $\mathcal{T}$, $\ell_1$ keeps the label merged}
			\ElseIf{\(\mathcal{T}\) contains an unmerged leaf~$\ell$,\label{if3}}
				{contract $B_{\ell}$,\label{line: contract leaf}  \\
				update \(\mathcal{T}\), label $\ell$ as merged}
			\ElseIf{\(\mathcal{T}\) contains a merged leaf \(\ell\) which is the only child of its parent \(p\),}
				{contract $B_p$,\label{line: contract parent}\\
				contract $v_p$ with $v_{\ell}$,\label{line: contract child to parent}\\
				update $\mathcal{T}$, label $p$ as merged}
			
		}
		Apply a width-minimal contraction sequence to the remaining graph.
		\label{alg: contract bd tww graph}
	\end{algorithm*}
	We claim that the algorithm \textsc{Contract} merges $G$ into a single vertex via a contraction sequence of the required width.
	
	First, we check that the algorithm terminates.
	Observe that  as long as the loop is executed, at least one of the if-conditions is satisfied in each iteration.
	Hence, $|V(G)|$ shrinks with every iteration, which proves that the algorithm terminates with a singleton graph, that is, it provides a contraction sequence.
	
	It remains to bound the width of the sequence.
	We start by observing that after every iteration of the loop, every bag is either an original bag from~\(\mathcal{T}\) or merged.
	Further, all merged bags are leaf bags in the updated tree decomposition. Finally, since in each iteration we contract at most
	one bag, and merged sibling leaf bags are contracted whenever possible in Line~\ref{line: contract siblings},
	every tree node \(p\) has at most two merged children at the end of each iteration, and at most one merged child
	at the start of Lines~\ref{line: contract leaf} and~\ref{line: contract parent}.
	
	For $a \in \mathbb{N}$ we set
	\[f(a) \coloneqq (a + \sqrt{a \ln a} + \sqrt{a} + 2\ln a)/ 2.\]
	We will exploit the result of~\cite{twwpaley} that an $a$-vertex graph has twin-width at most $f(a)$.

	Let $(G_i)_{i \leq |G|}$ be the contraction obtained by the algorithm.
	Fix $i \in [|G|]$ and $v \in V(G_i)$ and let $\mathcal{T}_i = (T_i, \mathcal{B}_i)$
	be the strong tree decomposition corresponding to $G_i$ and $B_j$ the bag containing $v$ in $\mathcal{T}_i$.
		
	If $j$ is neither a leaf, nor a leaf-parent, nor the parent of a leaf-parent in~$T_i$, then $\rdeg(v) = 0$.
		
	Assume that $j$ is a leaf of $T_i$, then all red edges incident to $v$ are either internal edges of $B_j$ or joining~$v$ with a vertex of $B_p$ where $p$ is the parent of $j$ in $T_i$.
	Since $\stw(G) \leq k$ there are at most $k$ red edges of the latter form.
	Internal red edges of a bag may only arise during the contraction of this leaf-bag in Line~\ref{line: contract leaf}.
	Since the corresponding partial contraction sequence is chosen to be width-minimal and by the bound of~\cite{twwpaley} we obtain that $\rdeg_{G_i}(v) \leq k + f(k)$.
	
	Now assume that $j$ is a leaf-parent in $T_i$.
	Then \(v\) is incident via red edges to vertices in at most two child bags of \(j\), one of which is merged.
	If the bag~$B_j$ of $\mathcal{T}_i$ was already contained in $\mathcal{T}$, then there are no internal red edges in $B_j$.
	Thus, the red degree of $v$ in $G_i$ is bounded by $k+1$.
	Otherwise $B_j$ is obtained during the contraction in Line~\ref{line: contract parent}.
	In this case, $j$ has precisely one child $\ell$ in $T_i$ and $\ell$ is merged.
	Thus,~\(v\) has red edges to this child, possibly to all \(k\) vertices of the parent bag of \(B_j\)
	and at most \(f(k)\) red neighbors within the bag \(B_j\).
	Hence, $\rdeg_{G_i}(v)\leq k+f(k)+1$.
		
	Finally,
	 assume that $j$ is neither a leaf nor a leaf-parent but parent of a leaf-parent in $T_i$.
	Then \(B_j\) has no internal red edges, and the only red edges incident to \(v\) are red edges to vertices in child bags of \(j\)
	which arise during the contractions in Lines~\ref{line: contract parent} and~\ref{line: contract child to parent}.
	Since each of the child bags of~\(j\) is contracted to one vertex before any contraction in another child bag happens,
	and \(j\) never has more than two partially contracted child bags, the red degree of $v$ is bounded by $k+1$.
\end{proof}
Theorem~\ref{thm: tww-vs-stw} in particular shows that the twin-width of a graph is bounded by a linear function
of its strong tree-width. In order to show that this linear dependence is necessary, we consider Paley graphs.
\begin{lemma} \label{lem:stwpaley}
	Let \(q\) be a prime power with \(q\equiv 1\pmod{4}\).
	The strong tree-width of the Paley graph \(P(q)\) is \(\frac{q-1}{2}\).
	\begin{proof}
	Recall that \(P(q)\) is strongly regular with parameters
	\((q,\frac{q-1}{2},\frac{q-5}{4},\frac{q-1}{4})\).
		
	We first provide a strong tree-decomposition of width \(\frac{q-1}{2}\) for \(P(q)\).
	The underlying tree of our decomposition is a path $T = xyz$.
	To define the bags, fix a vertex~\(v\in V(P(q))\) and choose as bags
	\(B_x\coloneqq\{v\}\), \(B_y\coloneqq N_{P(q)}(v)\), and \(B_z\coloneqq V(P(q))\setminus (B_x\cup B_y)\).
	Since \(P(q)\) is~\(\frac{q-1}{2}\)-regular and of order \(q\),
	all three bags have size at most \(\frac{q-1}{2}\). By construction, 
	there are no edges joining vertices of \(B_x\) and \(B_z\), which proves that $(T,\{B_x, B_y, B_z\})$ is indeed
	a strong tree decomposition of \(P(q)\).
	
	Conversely, let $\mathcal{T} = (T, \{B_i\colon i \in V(T)\})$ be a strong tree decomposition of~$G$.
	We show that \(\mathcal{T}\) has width at least \(\frac{q-1}{2}\). If \(\mathcal{T}\) has width
	greater than \(\frac{q-1}{2}\), we are done. Thus, assume from now on that \(\mathcal{T}\) has width
	at most \(\frac{q-1}{2}\). In particular, this implies that \(T\) has order at least \(3\),
	which implies that there exist two non-adjacent leaves in \(T\).
	
	Now, note that because \(1\) is a square in \(\mathbb{F}_q\), the graph \(P(q)\) has a Hamiltonian cycle
	and is hence \(2\)-connected. Thus, if some bag \(B_i\) of \(\mathcal{T}\) contains only a single vertex~\(v\),
	then \(i\) is a leaf of \(\mathcal{T}\).
	In this case, there is a unique neighbor $j$ of $i$ in $T$ and the \(\frac{q-1}{2}\) neighbors of \(v\) are contained in $B_j$,
	which proves that~\(\mathcal{T}\) has width \(\frac{q-1}{2}\).
	
	We are left with the case that every bag of \(\mathcal{T}\) contains at least two vertices
	and we show that no such strong tree decompositions can exist.
	For this, we note that by the parameters of the strongly regular graphs \(P(q)\), 
	for every pair of distinct vertices \(u,v\in V(P(q))\),
	the closed neighborhood \(N[\{u,v\}]\) contains either \(\frac{3}{4}(q-1)+1\)
	or \(\frac{3}{4}(q-1)+2\) many vertices, depending on whether \(u\) and \(v\) are adjacent or not.
	
	Now, consider a leaf \(\ell\) of \(T\) and denote the unique neighbor of \(\ell\) in \(T\) by \(p\).
	For every pair of distinct vertices \(u,v\in B_\ell\), we know that the whole closed neighborhood
	\(N[\{u,v\}]\) of \(u\) and \(v\) must be contained in \(B_\ell\cup B_p\).	
	Hence, \(|B_\ell\cup B_p|\geq \frac{3}{4}(q-1)+1.\)
	
	Since this observation is true for all leaf bags, all leaves of \(T\) must have the same neighbor in $T$.
	For another leaf bag \(B_{\ell'}\), we then get
	\begin{align*}
	|B_p|
	&=|B_\ell\cup B_p|+|B_{\ell'}\cup B_p|-|B_\ell\cup B_{\ell'}\cup B_p|\\
	&\geq\left(\frac{3}{4}(q-1)+1\right)+\left(\frac{3}{4}(q-1)+1\right)-q\\
	&=\frac{q+1}{2}.
	\end{align*}
	This contradicts our assumption that \(\mathcal{T}\) has width at most \(\frac{q-1}{2}\).
	\end{proof}
\end{lemma}

\begin{corollary}\label{cor: paley}
	Every Paley graph \(P(q)\) satisfies \(\tww(P(q))=\stw(P(q))\).
	In particular, for every $n \in \mathbb{N}$ there exists a graph $G_n$ of order at least $n$ which satisfies  \(\tww(G_n)=\stw(G_n)\).
\end{corollary}

\begin{proof}
	It is shown in~\cite{twwpaley} that $\tww(P(q)) = \frac{q-1}{2}$ for every prime power $q$ with $q \equiv 1 \pmod 4$. Combining this with Lemma~\ref{lem:stwpaley} yields the above statement.
\end{proof}

\section{Twin-width of graphs with small separators} \label{sec: small seps}
\subsection{Biconnected components}
We start our investigation of graphs of small adhesion by proving a bound on the twin-width of graphs in terms of the twin-width of their biconnected components.
This proof contains many of the ideas we will generalize later to deal with tri- and quasi-4-connected components as well as general graphs with a tree decomposition
of bounded adhesion.

The main obstacle to constructing contraction sequences of a graph from contraction sequences of its biconnected components
is that naively contracting one component might increase the red degree of the incident cut vertices in the neighboring components arbitrarily. Thus, we need to find contraction sequences of the biconnected components not involving the incident cut vertices.

Let~$G$ be a trigraph and~$\mathcal{P}$ be a partition of $V(G)$.
Denote by \(G/\mathcal{P}\) the trigraph obtained from~\(G\) by contracting each part of~\(\mathcal{P}\) into a single vertex. 
For a vertex $v \in V(G)$ we denote by~$\mathcal{P}(v)$ the part of~$\mathcal{P}$ that contains~$v$.
If $\mathcal{P}(v) \neq \{v\}$, then we obtain a refined partition~$\mathcal{P}_v$ by replacing~$\mathcal{P}(v)$ in~$\mathcal{P}$ by the two parts~$\mathcal{P}(v)\setminus\{v\}$ and~$\{v\}$.
Otherwise, we set~$\mathcal{P}_v = \mathcal{P}$.
Since~\(G/\mathcal{P}\)
can be obtained from~\(G/\mathcal{P}_v\) by at most one contraction,
and one contraction of a trigraph reduces the maximum red degree by at most \(1\) we have
\begin{equation} \label{eq: tww w r t one merge}
	\Delta_\red(G/\mathcal{P}_v)\leq \Delta_\red(G/\mathcal{P})+1.
\end{equation}

Recall that for a set \(U\subseteq V(G)\), we denote by \(\tww_U(G)\) the minimal width of a \(U\)-contraction sequence,
that is, a (partial) contraction sequence that only involves contractions within \(U\), that is of maximal length.
\begin{lemma}\label{lemma: contractions not using a vertex}
For every trigraph \(G\) and every vertex \(v\in V(G)\),
\[\tww_{V(G)\setminus\{v\}}(G)\leq \tww(G)+1.\]
\begin{proof}
Let~\((\mathcal{P}^{(i)})_{i\in[|G|]}\) be a sequence of partitions corresponding to a width-minimal contraction sequence of \(G\).
Further, let $j$ be the maximal index with~\( \{v\} \in \mathcal{P}^{(j)}\).
Then~\((\mathcal{P}^{(i)})_{i\in[j]}\) is a partial \(\tww(G)\)-contraction sequence which does not involve \(v\),
and by~\eqref{eq: tww w r t one merge} the sequence~\((\mathcal{P}^{(i)}_v)_{i\in[|G|]\setminus[j+1]}\) is a partial \((\tww(G)+1)\)-contraction
sequence which contracts the resulting trigraph until \(v\) and one further vertex remain.
Combining these two sequences yields the claim.
\end{proof}
\end{lemma}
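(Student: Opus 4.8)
The plan is to manufacture a $V(G-v)$-contraction sequence of $G$ out of an arbitrary width-minimal contraction sequence of $G$, viewed as a chain of successively coarser partitions $\mathcal{P}^{(1)},\dots,\mathcal{P}^{(|G|)}$ of $V(G)$, with $\mathcal{P}^{(1)}$ discrete and $\mathcal{P}^{(|G|)}$ a single part. The first step is to cut this chain at the moment $v$ stops being a singleton: let $j$ be the largest index with $\{v\}\in\mathcal{P}^{(j)}$. Since parts only ever grow along a contraction sequence, $v$ is a singleton in every $\mathcal{P}^{(i)}$ with $i\le j$, so the prefix $\mathcal{P}^{(1)},\dots,\mathcal{P}^{(j)}$ is a partial contraction sequence not involving $v$, of width at most $\tww(G)$.

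For the remaining steps we cannot simply follow the original sequence, since it eventually merges $v$. Instead I would keep $v$ aside: for $i\ge j+1$ replace $\mathcal{P}^{(i)}$ by its refinement $\mathcal{P}^{(i)}_v$ in which $v$ is pulled out into its own singleton part (well-defined for $i\ge j+1$, since there $v$ lies in a part of size at least two). One then checks that consecutive refinements still differ by exactly one contraction: the contraction underlying $\mathcal{P}^{(i)}\to\mathcal{P}^{(i+1)}$ either avoids the part of $v$ altogether, in which case it carries over verbatim, or merges $v$'s part with some other part $Q'$, in which case in the refined picture it merges $\mathcal{P}^{(i)}(v)\setminus\{v\}$ with $Q'$. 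Hence $(\mathcal{P}^{(i)}_v)_{i\ge j+1}$ is a legitimate partial contraction sequence, and by~\eqref{eq: tww w r t one merge} every $G/\mathcal{P}^{(i)}_v$ has maximum red degree at most $\Delta_\red(G/\mathcal{P}^{(i)})+1\le\tww(G)+1$.

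It remains to glue the prefix and the tail together. The delicate point is the seam: the contraction $\mathcal{P}^{(j)}\to\mathcal{P}^{(j+1)}$ merges $\{v\}$ with some part $Q$, so splitting $v$ back off gives $\mathcal{P}^{(j+1)}_v=\mathcal{P}^{(j)}$; that "step" collapses to the identity, so one just drops the duplicated partition from the concatenation. What is left is a sequence from the discrete partition to the partition whose only parts are $V(G-v)$ and $\{v\}$ — that is, a $V(G-v)$-contraction sequence — whose width is at most $\tww(G)+1$, yielding $\tww_{V(G-v)}(G)\le\tww(G)+1$.

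I expect the only thing needing genuine care is exactly this bookkeeping at the seam, together with the verification that refining every tail partition by $\{v\}$ preserves the "one contraction per step" property; the width bound itself is immediate from~\eqref{eq: tww w r t one merge} and the monotonicity of parts along a contraction sequence.
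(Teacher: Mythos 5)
Your proposal is correct and follows essentially the same route as the paper: run the width-minimal sequence untouched while $v$ is still a singleton, then refine every later partition by splitting $v$ back off and invoke~\eqref{eq: tww w r t one merge} for the $+1$. Your extra bookkeeping at the seam (observing $\mathcal{P}^{(j+1)}_v=\mathcal{P}^{(j)}$ and dropping the duplicated partition) is exactly what the paper encodes by starting the refined tail at index $j+2$.
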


\Biconnectedbound*

	\begin{proof}
		The lower bound follows from the fact that all biconnected components are induced subgraphs of~\(G\) together with the monotonicity of twin-width.
		
		For the upper bound we may assume that \(G\) is connected since the twin-width of a disconnected graph is the maximum twin-width of its connected components~\cite{tww1a}.
		Consider the block-cut-tree \(T\) of \(G\).
		The biconnected components and the cut vertices form a bipartition of~$T$.
		We choose a cut vertex \(r\) as a root of~$T$.
		For every biconnected component~\(C \in V(T)\), we let~\(v_C\) be the parent of~\(C\) in~\(T\).
		
		To make our argument simpler, let \(\widehat{G}\) be the graph obtained from \(G\)
		by joining a new vertex \(r_v\) to every vertex \(v \in V(G)\)  via a red edge.
		Similarly, for a biconnected component~\(C\), we let \(\widehat{C}\) be the graph obtained
		from \(C\) by attaching a new  vertex \(r_v\) to every vertex \(v\) of \(C\).
		We show that \(\tww(\widehat{G})\leq \max_{i\in[\ell]}\tww(C_i)+2\).
		The claim then follows since~\(G\) is an induced subgraph of~\(\widehat{G}\).
				
		\begin{claim*}
		For every biconnected component \(C\) of $G$,
		\[\tww_{V(\widehat{C})\setminus\{v_C\}}(\widehat{C})\leq \tww(C)+2.\]
		\begin{claimproof}
			By applying Lemma~\ref{lemma: contractions not using a vertex}
			to \(C\) and \(v_C\), we find a \((V(C)\setminus\{v_C\})\)-contraction sequence \(s\) of \(C\) of width at most \(\tww(C)+1\).
			We show how this contraction sequence can be adapted to also contract the vertices
			\(r_v\) for all \(v\in V(C)\).
			Indeed, before every contraction \(vw\) of \(s\), we insert the contraction of \(r_v\) and \(r_w\).
			This keeps the invariant that we never contract a vertex from \(C\) with a vertex \(r_v\),
			and further, every vertex of \(C\) is incident to at most one vertex \(r_v\) (or a contraction of those vertices).
			Moreover, the red degree among the vertices~\(r_v\) also stays bounded by \(2\).
			The entire partial contraction sequence constructed so far thus has width at most \(\tww(C)+2\).
			
			After applying this sequence, we end up with at most four vertices: \(v_C\), \(r_{v_C}\), the contraction of \(C-v_C\) and the
			contraction of all vertices \(r_v\) for vertices \(v\in V(C)\setminus\{v_C\}\). As \(r_{v_C}\) is only connected to \(v_C\)
			and the contraction of all other vertices \(r_v\) is not connected to \(v_C\), these four vertices form a path of length four.
			Thus, the contraction sequence can be completed with trigraphs of width at most~\(2\).
		\end{claimproof}
		\end{claim*}
		
		Now, consider again the whole graph \(\widehat{G}\) and choose a leaf block \(C\) of \(T\).
		We can apply the partial contraction sequence given by the previous claim to \(\hat{C}\)
		in \(\hat{G}\). Because we never contract \(v_C\) with any other vertex, this does not
		create red edges anywhere besides inside \(\hat{C}\). Thus, it is still a partial \((\tww(C)+2)\)-contraction sequence
		of \(\widehat{G}\). Moreover, the resulting trigraph is isomorphic to \(\widehat{G}-V(\widehat{C}-v_C-r_{v_C})\),
		that is, the graph obtained from \(\widehat{G}\) by removing the biconnected component \(C\) (but leaving the cut vertex \(v_C\) and its neighbor \(r_{v_C}\)).
		By iterating this, we can remove all biconnected components one after the other using width at most \(\max_{i\in[\ell]}\tww(C_i)+2\).
		Finally, we end up with precisely two vertices: The root cut vertex \(r\), together with its red neighbor \(r_r\), which we can simply contract.
	\end{proof}

Note that the bounds in Theorem~\ref{Theorem: Bound on twin-width by twin-width of biconnected components} are sharp even on the class of trees:
the biconnected components of a tree are its edges which have twin-width \(0\). As there are trees both of twin-width \(0\) and of twin-width~\(2\),
both the upper and the lower bound can be obtained.

\begin{corollary}
	Let \(\mathcal{C}\) be a graph class closed under taking biconnected components.
	Then \(\mathcal{C}\) has bounded twin-width if and only if the subclass of
	\(2\)-connected graphs in \(\mathcal{C}\) has.
	\begin{proof}
		Since every biconnected component is either \(2\)-connected or a complete graph on two vertices, we can bound the twin-width of every graph
		via the maximum twin-width of a \(2\)-connected biconnected component.
	\end{proof}
\end{corollary}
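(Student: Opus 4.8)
The plan is to read both implications straight off Theorem~\ref{Theorem: Bound on twin-width by twin-width of biconnected components}, which sandwiches \(\tww(G)\) between \(\max_{i\in[\ell]}\tww(C_i)\) and \(\max_{i\in[\ell]}\tww(C_i)+2\) for the biconnected components \(C_1,\dots,C_\ell\) of \(G\). No new machinery is needed; the corollary is essentially a reformulation of that bound at the level of graph classes.

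For the forward direction, I would argue as follows. Suppose \(\mathcal{C}\) has twin-width bounded by some constant \(d\). Then in particular every \(2\)-connected graph belonging to \(\mathcal{C}\) has twin-width at most \(d\), so the subclass of \(2\)-connected members of \(\mathcal{C}\) has bounded twin-width. This half does not even use that \(\mathcal{C}\) is closed under taking biconnected components; it is purely the statement that a subclass of a class of bounded twin-width again has bounded twin-width.

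For the backward direction, suppose every \(2\)-connected graph in \(\mathcal{C}\) has twin-width at most \(d\). Let \(G\in\mathcal{C}\) be arbitrary with biconnected components \(C_1,\dots,C_\ell\). By closure of \(\mathcal{C}\) under taking biconnected components, each \(C_i\) again lies in \(\mathcal{C}\). Each \(C_i\) is either \(2\)-connected, in which case \(\tww(C_i)\leq d\) by hypothesis, or it is a bridge, in which case \(\tww(C_i)=0\). Hence \(\max_{i\in[\ell]}\tww(C_i)\leq d\), and Theorem~\ref{Theorem: Bound on twin-width by twin-width of biconnected components} yields \(\tww(G)\leq d+2\). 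Since \(G\) was arbitrary, \(\mathcal{C}\) has twin-width bounded by \(d+2\).

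I do not expect any genuine obstacle here: the entire content is carried by the biconnected-components bound. The only point worth an explicit remark is the case analysis on the \(C_i\) — recalling that a biconnected component of any graph is either \(2\)-connected or a bridge, and that \(\tww(K_2)=0\) — so that the single hypothesis on \(2\)-connected graphs suffices to bound the twin-width of all biconnected components of \(G\) simultaneously.
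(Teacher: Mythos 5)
Your argument is correct and is essentially the paper's own proof, just spelled out: both directions are read off Theorem~\ref{Theorem: Bound on twin-width by twin-width of biconnected components}, using that each biconnected component is either \(2\)-connected or a bridge (of twin-width \(0\)) and that closure under biconnected components keeps the components inside \(\mathcal{C}\). Nothing further is needed.
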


Moreover, Theorem~\ref{Theorem: Bound on twin-width by twin-width of biconnected components} also
reduces the algorithmic problem of computing good contraction sequences for a graph \(G\)
to the corresponding problem on the biconnected components of \(G\), which might be much smaller.

\begin{remark}
Note that in contrast to the case of connected components, where the twin-width of a graph is determined by the twin-widths of its components,
the twin-width of a connected graph is not determined by the multiset of its biconnected components.
As an example, consider a tree of twin-width \(2\) and a star with the same number of edges of twin-width \(0\).
Then both graphs have the same biconnected components, but different twin-width.
\end{remark}

\subsection{Apices and contractions respecting subsets}
To deal with adhesion sets of size at least \(2\), it no longer suffices to find contraction sequences of the parts that do not contract
vertices in the adhesion sets. Indeed, as those vertices can appear in parts corresponding to a subtree of unbounded depth,
this could create an unbounded number of red edges incident to vertices in adhesion sets. Instead, we want contraction sequences
that create no red edges incident to any vertices of adhesion sets.

For a trigraph~\(G\) and a set of vertices \(A\subseteq V(G)\) of red degree~\(0\),
we say that a partial sequence of \(d\)-contractions \(G=G_0,G_1,\dots,G_\ell\) \emph{respects \(A\)}
if \(G_i[A]=G[A]\) and \(\rdeg_{G_i}(a)=0\) for all~\(i\leq \ell\) and \(a\in A\).
Thus, for every contraction~\(xy\) in the sequence, we have \(x,y \notin A\)
and \(N(x)\cap A=N(y)\cap A\), which implies that the vertices in \(A\) are not incident
to any red edges all along the sequence.

A \emph{complete \(d\)-contraction sequence respecting~\(A\)} is a partial sequence of \(d\)-contractions
respecting~\(A\) of maximal length, i.e., one whose resulting trigraph~\(G_\ell\) does not allow a further contraction respecting~\(A\).
This is equivalent to no two vertices in \(V(G_\ell)\setminus A\) having the same neighborhood in~\(A\).
In particular, a complete contraction sequence respecting \(A\) leaves at most \(2^{|A|}\) vertices besides \(A\).

We write \(\tww(G,A)\) for the minimal \(d\) such that there exists a complete \(d\)-contraction sequence
respecting~\(A\). For a single vertex \(v\in V(G)\), we also write \(\tww(G,v)\) for \(\tww(G,\{v\})\).
Note that \(\tww(G)=\tww(G,\emptyset)\).

It was proven in \cite[Theorem 2]{tww1a} that adding a single apex to a graph of twin-width~\(d\) raises the twin-width to at most \(2d+2\).
The proof given there readily works in our setting without any modifications.
\begin{theorem}\label{theorem: Adding one apex keeps twin-width bounded}
	Let \(G\) be a trigraph, \(v\in V(G)\) a vertex not incident to any red edges and \(A\subseteq V(G)\setminus\{v\}\) a set of vertices. 
	Then
	\[\tww(G,A\cup\{v\})\leq 2\tww(G-v,A)+2.\]
\end{theorem}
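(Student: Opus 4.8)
The goal is to show that if $v$ is not incident to any red edge in $G$ and $A \subseteq V(G)\setminus\{v\}$, then $\tww(G, A\cup\{v\}) \le 2\tww(G-v,A)+2$. The idea, following \cite{tww1a,tww1b}, is to start from a complete $d$-contraction sequence of $G-v$ respecting $A$, where $d = \tww(G-v,A)$, and to simulate it inside $G$ while also keeping $v$ (and its interaction with the rest of the graph) under control. Throughout the simulation, every part that arises will be classified according to how it sees $v$: either \emph{homogeneous} (fully adjacent to $v$, via black or red edges — but since $v$ has no red edges incident in $G$, this means fully black-adjacent) or \emph{fully non-adjacent} to $v$, or \emph{mixed}. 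The key accounting device is to maintain, as an invariant, that at every step there is at most one mixed part. Contracting two parts that agree on $v$ preserves their status and creates no new red edge to $v$; contracting a homogeneous part with a non-adjacent part (or either with the mixed part) produces a mixed part, so we are only ever allowed to do this when there is currently no mixed part.

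\textbf{Key steps, in order.} First, I fix a complete $d$-contraction sequence $\mathcal{S}$ of $G-v$ respecting $A$; let $\mathcal{P}^{(0)},\dots,\mathcal{P}^{(m)}$ be the corresponding partitions of $V(G-v)$, with $|\mathcal{P}^{(m)}| \le |A| + 2^{|A|}$ (actually just that the sequence is complete). Second, I build a contraction sequence of $G$ in two interleaved phases. In the \emph{main phase}, I replay the contractions of $\mathcal{S}$ one at a time, but \emph{before} performing a contraction $xy$ of $\mathcal{S}$ I may need to first ``clean up'': if exactly one of $x,y$ is (currently) adjacent to $v$ and the other is not, then performing $xy$ would create a mixed part, so I first contract the existing mixed part (if any) into $x$ or into the appropriate neighbouring homogeneous part to destroy it — here I use that all homogeneous parts see $v$ identically (all black-adjacent), so merging the old mixed part with a homogeneous part is legal as a contraction of $G$ and only transiently raises the red degree. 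Third, I track the red degree: a part $p$ in the simulated graph on $V(G)$ has red neighbours coming from (a) the red edges it had in the corresponding graph of $\mathcal{S}$ — at most $d$ of these — plus (b) possibly $v$ itself if $p$ is the unique mixed part, plus (c) the transient increase of at most one more from the clean-up contraction, by inequality~\eqref{eq: tww w r t one merge}. Symmetrically, $v$'s red degree is at most the number of mixed parts, which is at most one, plus a transient one during clean-up. Fourth, once the main phase ends we are left with $v$, the vertices of $A$ (untouched, since $\mathcal{S}$ respects $A$ and $v$ interacts with $A$-vertices the same way for all of them — wait, not necessarily; $v$'s neighbourhood within $A$ is whatever it is, but $A$-vertices still have red degree $0$), and at most $2^{|A|}$ further parts, and we stop: the sequence is then complete respecting $A \cup \{v\}$ provided no two parts outside $A \cup \{v\}$ have the same neighbourhood in $A \cup \{v\}$ — if two do, contract them, which keeps respecting the set. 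Collecting the bounds: red degree of any part is at most $d + 1 + 1 = d+2$; red degree of $v$ is at most $1 + 1 = 2 \le 2d+2$; so the whole sequence has width at most $2d + 2$ (the factor two absorbing the bookkeeping — in fact one gets $\max(d+2, 2)$ in the clean case, and $2d+2$ is a safe bound that matches the apex theorem's form).

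\textbf{Main obstacle.} The delicate point is the clean-up step: when a contraction of $\mathcal{S}$ would turn a homogeneous and a non-adjacent part into a mixed one, I must be able to first eliminate the currently-existing mixed part without violating the ``respects $A$'' condition and without the red degree blowing past $2d+2$. This works because (i) the old mixed part can always be absorbed into some part, and choosing a homogeneous part to absorb it into makes the result homogeneous, costing at most one transient red edge by~\eqref{eq: tww w r t one merge}; and (ii) this clean-up never touches $A$ (mixed and homogeneous parts are both disjoint from $A$, which is inert throughout). One must also check that the two-phase interleaving still terminates and still produces a \emph{complete} sequence respecting $A\cup\{v\}$ — this is where the observation ``complete respecting $A$ leaves $\le 2^{|A|}$ parts besides $A$, hence complete respecting $A\cup\{v\}$ leaves $\le 2^{|A|+1}$'' closes the argument. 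Since the paper explicitly says the proof of \cite[Theorem 2]{tww1a,tww1b} carries over verbatim, I would present exactly this simulation with the mixed-part invariant and the clean-up, and remark that the only change from the original is that ``red neighbour of $v$'' is replaced by ``mixed part'', which is harmless since $v$ starts red-edge-free and we never create a red edge at $v$ except transiently.
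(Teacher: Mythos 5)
There is a genuine gap, and it starts with the definition you are trying to meet. By the paper's definition, a contraction sequence witnessing \(\tww(G,A\cup\{v\})\) must \emph{respect} \(A\cup\{v\}\): every vertex of \(A\cup\{v\}\), in particular \(v\) itself, must have red degree \(0\) in every intermediate trigraph, equivalently every contraction \(xy\) must satisfy \(N(x)\cap(A\cup\{v\})=N(y)\cap(A\cup\{v\})\), so a \(v\)-neighbour may never be contracted with a \(v\)-non-neighbour. Your construction deliberately creates ``mixed'' parts, and a mixed part is exactly a red neighbour of \(v\); you even budget ``red degree of \(v\) at most \(2\)''. Such a sequence therefore does not respect \(A\cup\{v\}\) at all, so whatever its width, it cannot witness the stated bound on \(\tww(G,A\cup\{v\})\). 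This is precisely why the proof the paper invokes (the apex lemma of Twin-width I) refines every part \(P\) of the chosen sequence of \(G-v\) into \(P\cap N(v)\) and \(P\setminus N(v)\) and simulates each contraction of \(G-v\) by two contractions of \(G\): then every part stays homogeneous to \(v\), \(v\) keeps red degree \(0\), each part has at most \(2d\) red neighbours from the refinement plus one to its sibling half plus one transient, giving \(2d+2\). The factor \(2\) is the unavoidable price of that refinement, which your scheme tries to do without.

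Even read as a bound on plain twin-width, the bookkeeping of the ``clean-up'' does not work. Mixedness is monotone: once a part contains both a neighbour and a non-neighbour of \(v\), every part containing it is mixed, so contracting the old mixed part into a homogeneous part does not ``destroy'' it; the only way to keep a single mixed part is to absorb it into the newly created one, i.e.\ to grow one junk cluster that is the union of more and more parts of the original sequence \(\mathcal{S}\). From that moment your partition is a strict coarsening of the one in \(\mathcal{S}\), and the charge ``(a) at most \(d\) red edges coming from \(\mathcal{S}\)'' is unjustified for the cluster: two parts of \(\mathcal{S}\) may each have at most \(d\) red neighbours while their union is inhomogeneous to almost every other part (one fully adjacent where the other is fully non-adjacent), so the cluster's red degree is not bounded by \(d+O(1)\), and the divergence compounds as later contractions of \(\mathcal{S}\) drag the absorbed vertices along. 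Your final estimate \(\max(d+2,2)\) would in fact improve the known \(2d+2\) apex bound, which should itself have been a warning sign. To fix the proof, replace the mixed-part invariant by the neighbourhood refinement: run \(\mathcal{S}\), keeping each part split by \(N(v)\) and performing the two half-contractions for each step; respecting \(A\) is inherited, \(v\) is never touched, and completeness with respect to \(A\cup\{v\}\) follows because surviving parts from distinct parts of \(\mathcal{S}\) differ on \(A\) and the two halves of one part differ on \(v\).
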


\begin{corollary}\label{Corollary: Bound of pointed twin-width by twin-width}
Let \(G\) be a trigraph and \(A\subseteq V(G)\) a subset of vertices with \(\rdeg(a)=0\) for all vertices \(a\in A\).
Then
\[\tww(G,A)\leq 2^{|A|}\tww(G)+2^{|A|+1}-2.\]
\begin{proof}
We proceed by induction on \(|A|\).
For \(|A|=0\), the claim is immediate.
Thus, write \(A=A_0\dotcup\{a\}\) and assume the claim is true for \(A_0\).
Theorem~\ref{theorem: Adding one apex keeps twin-width bounded} and the induction hypothesis yield
\begin{align*}
\tww(G,A)
=\tww(G,A_0\cup\{a\})
&\leq 2\tww(G-a,A_0)+2\\
&\leq2\left(2^{|A_0|}\tww(G-A)+2^{|A_0|+1}-2\right)+2\\
&=2^{|A|}\tww(G-A)+2^{|A|+1}-2\\
&\leq2^{|A|}\tww(G)+2^{|A|+1}-2.\qedhere
\end{align*}
\end{proof}
\end{corollary}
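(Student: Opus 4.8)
The plan is to prove the inequality by induction on $|A|$, stripping off one vertex of $A$ at a time via Theorem~\ref{theorem: Adding one apex keeps twin-width bounded} and closing the recursion with the monotonicity of twin-width under induced subgraphs. For the base case $|A| = 0$ we have $\tww(G, \emptyset) = \tww(G)$ by definition, while the right-hand side is $2^{0}\tww(G) + 2^{1} - 2 = \tww(G)$, so the bound holds with equality.

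For the inductive step I would write $A = A_0 \dotcup \{a\}$ with $|A_0| = |A| - 1$ and assume the statement for all trigraphs and all size-$|A_0|$ vertex sets consisting of red-degree-$0$ vertices. Since $\rdeg_G(a) = 0$ and $A_0 \subseteq V(G)\setminus\{a\}$, Theorem~\ref{theorem: Adding one apex keeps twin-width bounded} applies and gives $\tww(G, A) \leq 2\tww(G - a, A_0) + 2$. Every vertex of $A_0$ still has red degree $0$ in the induced subgraph $G - a$, so the induction hypothesis applies to the pair $(G - a, A_0)$ and yields $\tww(G - a, A_0) \leq 2^{|A_0|}\tww(G - a) + 2^{|A_0|+1} - 2$; moreover, since $G - a$ is an induced subgraph of $G$, monotonicity gives $\tww(G - a) \leq \tww(G)$. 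Chaining the three inequalities and simplifying $2\bigl(2^{|A_0|}\tww(G) + 2^{|A_0|+1} - 2\bigr) + 2$ produces exactly $2^{|A|}\tww(G) + 2^{|A|+1} - 2$.

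I do not expect a genuine obstacle here: this is a routine induction. The only two points that need a moment of care are that the red-degree-$0$ hypothesis required by Theorem~\ref{theorem: Adding one apex keeps twin-width bounded} is inherited when we pass from $G$ to the induced subgraph $G - a$ and from $A$ to $A_0$, and that the induced-subgraph monotonicity recorded in the preliminaries is precisely what lets us replace the intermediate quantity $\tww(G - a)$ (and, after unfolding the recursion, $\tww(G - A)$) by $\tww(G)$, so that the final estimate is stated purely in terms of $\tww(G)$.
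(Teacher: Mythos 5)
Your proof is correct and follows essentially the same route as the paper: induction on \(|A|\), peeling off one vertex via Theorem~\ref{theorem: Adding one apex keeps twin-width bounded}, and finishing with induced-subgraph monotonicity of twin-width. In fact your chain of inequalities uses the correct \(+2\) from that theorem (the paper's displayed computation contains a sign typo, writing \(-2\)), and your explicit check that the red-degree-\(0\) hypothesis passes to \(A_0\subseteq V(G-a)\) is exactly the point the paper leaves implicit.
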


\subsection{Tree decompositions of small adhesion}
We are now ready to generalize the linear bound on the twin-width of a graph in terms of its biconnected components
to allow for larger separators of bounded size. This is most easily expressed in terms of tree decompositions of bounded adhesion.

In all of the following two sections, let \(G\) be a graph, \(\mathcal{T}=((T,r), \{B_t\colon t \in V(T)\})\)
a rooted tree decomposition with adhesion~\(k\geq 1\).

For a node \(t\in V(T)\), we write \(P_t\coloneqq G[B_t]\) for the part associated to \(t\).
For a node~\(t\in V(T)\) with parent~\(s\in V(T)\) we write \(S_t\coloneqq B_t\cap B_s\)
and call \(S_t\) the \emph{parent separator} of \(P_t\)
or a \emph{child separator} of~\(P_s\).
Moreover, we set \(S_r\coloneqq\emptyset\) to be the \emph{root separator}.
For a node \(t\in V(T)\), we write~\(T_t\) for the subtree of~\(T\) with root~\(t\),
\(G_t\coloneqq G[\bigcup_{s\in V(T_t)} B_s]\) for the corresponding subgraph and~\(\mathcal{T}_t\) for the corresponding tree decomposition
of \(G_t\).

We may assume without loss of generality that every two nodes \(s,t\in V(T)\) with \(S_s=S_t\) are siblings.
Indeed, if they are not, let \(s'\) be a highest node in the tree with parent separator~\(S_s\)
and construct another tree decomposition by attaching all nodes \(t\) with \(S_t=S_s\) directly
to the parent of \(s'\) instead of their old parent. By repeating this procedure if necessary, we obtain the required property.
Note that this procedure does not affect the parts of \(\mathcal{T}\), nor which sets within the parts are the parent
and child separators.

For a node \(t\in V(T)\) with children \(c_1,\dots,c_\ell\), we set
\(N_{c_i}\coloneqq 
\{N(v)\cap S_{c_i}\colon v\in V(G_{c_i})\setminus S_{c_i}\}\)
to be the set of (possibly empty) neighborhoods
that vertices in \(G_{c_i}-S_{c_i}\) have in the separator~\(S_{c_i}\) (and thus in~\(P_t\)).
We now define a trigraph \(\widetilde{P}_t\) with vertex set
\[V(\widetilde{P}_t)\coloneqq V(P_t)\dotcup \{s^{c_i}_M\colon i\in[\ell], M\in N_{c_i}\}.\]
We will often abuse notation and also denote the set \(\{s^{c_i}_M\colon M\in N_{c_i}\}\)
by \(N_{c_i}\).

We define the edge set of \(\widetilde{P}_t\) such that
\begin{enumerate}
\item \(\widetilde{P}_t[V(P_t)]=P_t\),
\item \(\widetilde{P}_t[N_{c_i}]\) is a red clique for every \(i\),
\item \(s^{c_i}_M\) is connected via black edges to all vertices in \(M\),
\end{enumerate}
and there are no further red or black edges.
Note that in \(\widetilde{P}_t\), there are no red edges incident to any vertices in~\(P_t\) and thus in particular not to any vertices in \(S_t\).
A drawing of the gadget attached to \(S_{c_i}\) in~\(\widetilde{P}_t\) in comparison with the
simpler gadgets we will reduce to later can be seen in Figure~\ref{figure: gadgets attached to parts}.

\newcommand{\separator}{
	\node[shape=rectangle, fill=black, inner sep=3pt] (A1) at (0,0) {};
	\node[shape=rectangle, fill=black, inner sep=3pt] (A2) at (1,-0.5) {};
	\node[shape=rectangle, fill=black, inner sep=3pt] (A3) at (2,0) {};
	
	\draw (A1) -- +(-0.3,-0.2)
	      (A1) -- +( 0.1,-0.3)
	      (A2) -- +(-0.3, -0.2)
	      (A3) -- +(-0.3,-0.3)
	      (A3) -- +(-0.2,-0.4)
	      (A3) -- +( 0.2, -0.3);
}
\newcommand{\separatoredges}[1][dashed]{\draw[#1] (A1) -- (A2) -- (A3) -- (A1);}
\begin{figure}
\centering
\begin{tikzpicture}
\begin{scope}[shift={(0,0)}]
	\separator
	\separatoredges

	\node at (1,-1) {\(P_t\)};
\end{scope}

\begin{scope}[shift={(3.5,0)}]
	\separator
	\separatoredges
	
	\node[shape=circle, fill=black, inner sep=2pt] (A')    at (1,1.95) {};
	\node[shape=circle, fill=black, inner sep=2pt] (A'1)   at (-0.3,1.8) {};
	\node[shape=circle, fill=black, inner sep=2pt] (A'12)  at (0,1.35) {};
	\node[shape=circle, fill=black, inner sep=2pt] (A'2)   at (0.5,1) {};
	\node[shape=circle, fill=black, inner sep=2pt] (A'13)  at (1,0.9) {};
	\node[shape=circle, fill=black, inner sep=2pt] (A'123) at (1.5,1) {};
	\node[shape=circle, fill=black, inner sep=2pt] (A'23)  at (2,1.35) {};
	\node[shape=circle, fill=black, inner sep=2pt] (A'3)   at (2.3,1.8) {};
	
	\draw (A'1) -- (A1) -- (A'123)
	      (A'2) -- (A2) -- (A'123)
	      (A'3) -- (A3) -- (A'123);
	\draw (A1) -- (A'12) -- (A2) -- (A'23) -- (A3) -- (A'13) -- (A1);
	
	\draw[red] (A') -- (A'1) -- (A'2) -- (A'3) -- (A'12) -- (A'13) -- (A'23) -- (A'123) -- (A')
	           (A') -- (A'2) -- (A'12) -- (A'23) -- (A') (A'1) -- (A'3) -- (A'13) -- (A'123) -- (A'1)
	           (A') -- (A'3) -- (A'23) -- (A'1) -- (A'12) -- (A'123) -- (A'2) -- (A'13) -- (A');
	\draw[red] (A') -- (A'12) (A'1) -- (A'13) (A'2) -- (A'23) (A'3) -- (A'123);
	
	\node at (1,-1) {\(\widetilde{P}_t\)};
\end{scope}

\begin{scope}[shift={(7,0)}]
	\separator
	\separatoredges
	
	\node[shape=circle, fill=black, inner sep=2pt] (A) at (1, 1) {};
	\draw[red] (A1) -- (A) -- (A2) (A) -- (A3);
	
	\node at (1,-1) {\(\widehat{P}_t\)};
\end{scope}

\begin{scope}[shift={(10.5,0)}]
	\separator
	\separatoredges[red]
	
	\node at (1,-1) {\(\overline{P}_t\)};
\end{scope}
\end{tikzpicture}
\caption{A separator \(S_{t'}\) on three (square) vertices together with the three versions
	of gadgets we attach to it. Dashed edges represent either edges or non-edges.	
	In \(\widetilde{P}_t\), we add a red clique consisting of one vertex for every neighborhood of vertices in \(G_{t'}-S_{t'}\) in \(S_{t'}\).
	In \(\widehat{P}_t\), we only add a single
	vertex with red edges to all vertices in \(S_{t'}\). In \(\overline{P}_t\), we add no new vertices but turn
	all child separators into red cliques. }
\label{figure: gadgets attached to parts}
\end{figure}

\begin{lemma}\label{Lemma: Bound twin-width by twin-width of blocks}
	Let \(G\) and \(\mathcal{T}\) be as above.
	For every \(t\in V(T)\), it holds that
	\[\tww(G_t,S_t)\leq \max_{s\in V(T_t)}\tww(\widetilde{P}_s,S_s).\]
	In particular,
	\(\tww(G)\leq\max\limits_{s\in V(T)}\tww(\widetilde{P}_s,S_s)
	\leq 2^k\max\limits_{s\in V(T)}\tww(\widetilde{P}_s)+2^{k+1}-2\).

	\begin{proof}
		We proceed by induction on the height of \(t\) in the tree \(T\).
		If \(t\) is a leaf, then \(G_t=P_t=\widetilde{P}_t\) and we can use a contraction sequence of \(\widetilde{P}_t\) respecting \(S_t\).
		
		Thus, assume that \(t\) is not a leaf and assume the claim is true for all nodes of \(T\) below~\(t\).
		Let \(c_1,\dots,c_\ell\) be the children of \(t\) in \(T\). By the induction hypothesis,
		\[\tww(G_{c_i},S_{c_i})
		\leq \max_{s\in V(T_{c_i})}\tww(\widetilde{P}_s,S_s)
		\leq \max_{s\in V(T_t)}\tww(\widetilde{P}_s,S_s).\]
		Thus, we find complete contraction sequences of all \(G_{c_i}\) respecting \(S_{c_i}\) within the required width bounds.
		Because the graphs \(G_{c_i}\) and \(G_{c_j}\) for distinct \(i\) and \(j\) only intersect in \(S_{c_i}\cap S_{c_j}\),
		which are not contracted, we do not create red edges between the different graphs \(G_{c_i}\) nor between these graphs and
		their parent separators. Thus, we can combine these contraction sequences without exceeding our width bounds.
		This results in the graph \(\widetilde{P}_t\), where possibly some red edges within child separators of \(B_t\)
		are missing or black.
		We can thus in a final step apply a contraction sequence of \(\widetilde{P}_t\) respecting \(S_t\).
		
		The second claim follows from applying the first claim to \(t=r\)
		and then applying Corollary~\ref{Corollary: Bound of pointed twin-width by twin-width}. \qedhere
	\end{proof}
\end{lemma}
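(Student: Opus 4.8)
The plan is to establish the first inequality \(\tww(G_t,S_t)\le \max_{s\in V(T_t)}\tww(\widetilde P_s,S_s)\) by induction on the height of \(t\) in \(T\), and then to read off the ``in particular'' part by specializing to \(t=r\) (so that \(S_r=\emptyset\) and \(\tww(G)=\tww(G_r,S_r)\)) and applying Corollary~\ref{Corollary: Bound of pointed twin-width by twin-width} to each \(\widetilde P_s\) with the vertex set \(S_s\), which has red degree \(0\) in \(\widetilde P_s\) and size at most \(k\). The base case is immediate: a leaf \(t\) of \(T\) has no children, so \(\widetilde P_t=P_t=G_t\), and a complete contraction sequence of \(\widetilde P_t\) respecting \(S_t\) is precisely what is required.

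For the inductive step, let \(c_1,\dots,c_\ell\) be the children of \(t\) and set \(d\coloneqq\max_{s\in V(T_t)}\tww(\widetilde P_s,S_s)\). By the inductive hypothesis and monotonicity of the maximum over subtrees, each \(G_{c_i}\) admits a complete \(d\)-contraction sequence respecting \(S_{c_i}\). I would run these sequences one after another inside \(G_t\), in any order. The crucial point is that a sequence respecting \(S_{c_i}\) only ever contracts pairs of vertices of \(G_{c_i}-S_{c_i}\) with equal neighbourhood in \(S_{c_i}\); since the tree-decomposition axioms force \(S_{c_i}\) to separate \(V(G_{c_i})\setminus S_{c_i}\) from the rest of \(G_t\), and since separator vertices are never contracted, performing the \(c_i\)-sequence creates no red edge incident to any vertex outside \(V(G_{c_i})\setminus S_{c_i}\) --- in particular none incident to \(P_t\), to \(S_t\), or to the already-processed remnants of the other subtrees. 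Hence concatenating the \(\ell\) sequences stays within width \(d\).

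After all \(\ell\) sequences, \(G_{c_i}\) has been reduced to \(S_{c_i}\) together with exactly one leftover vertex for each neighbourhood \(M\in N_{c_i}\), black-adjacent to \(M\) and non-adjacent to the rest of \(P_t\); leftover vertices coming from distinct children are mutually non-adjacent, and the subgraph induced on \(V(P_t)\) is unchanged. Comparing this with the definition of \(\widetilde P_t\), the current trigraph is obtained from \(\widetilde P_t\) by at most recolouring some of the red clique edges among the \(N_{c_i}\) black (or deleting them). Since recolouring red edges black never increases the red degree along a fixed contraction sequence, and since this does not change adjacencies to \(S_t\subseteq V(P_t)\), any complete \(\tww(\widetilde P_t,S_t)\)-contraction sequence of \(\widetilde P_t\) respecting \(S_t\) applies verbatim to the current trigraph with width at most \(\tww(\widetilde P_t,S_t)\le d\); applying it finishes the reduction of \(G_t\) to \(S_t\) plus at most \(2^{|S_t|}\) vertices, closing the induction. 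The ``in particular'' inequality then follows, using \(2^{|S_s|}\tww(\widetilde P_s)+2^{|S_s|+1}-2\le 2^{k}\tww(\widetilde P_s)+2^{k+1}-2\).

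I expect the main obstacle to be the bookkeeping in the inductive step rather than a single hard idea: one must verify that concatenating the children's sequences yields \emph{exactly} \(\widetilde P_t\) up to such a red/black recolouring --- this is precisely what the gadgets \(N_{c_i}\) are designed to capture, so the identification of leftover vertices with the \(s^{c_i}_M\) and the check that no stray adjacencies appear is the heart of the argument --- and one must state cleanly the monotonicity fact ``turning red edges black cannot increase the width of a contraction sequence'' so that the sequence for \(\widetilde P_t\) can legitimately be reused on the combined trigraph.
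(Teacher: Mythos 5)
Your proof is correct and follows essentially the same route as the paper: induction on the height of \(t\), concatenating the children's complete contraction sequences respecting their parent separators (using the separation property of the tree decomposition to keep the pieces independent), and finishing with a sequence of \(\widetilde{P}_t\) respecting \(S_t\), noting that the combined trigraph differs from \(\widetilde{P}_t\) only in that some red gadget edges may be black or absent, which cannot increase the width. The extra care you take in identifying the leftover vertices with the \(s^{c_i}_M\) and in stating the red-to-black monotonicity fact simply makes explicit what the paper's proof glosses over.
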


If \(\mathcal{T}\) has bounded width, we can proceed as in \cite[Lemma 3.1]{bounding-twwa} to bound the twin-width of the graphs~%
\(\widetilde{P}_t\) and thus the twin-width of \(G\): 
\begin{lemma}\label{Lemma: Bound on twin-width of graphs with small width and adhesion}
	Let \(G\) and \(\mathcal{T}\) be as above and additionally assume that \(\mathcal{T}\) has width at most~\(w\).
	For every \(t\in V(T)\), it holds that
	\[\tww(\widetilde{P}_t,S_t)\leq 3\cdot 2^{k-1}+\max(w-k-1,0).\]
	
	\begin{proof}
		We first note that the red degree of \(\widetilde{P}_t\) itself is bounded by \(2^k-1\).
		
		Now, let \(c_1,\dots,c_\ell\) be the (possibly empty) list of children of \(t\) in \(T\).
		We first find a contraction sequence of \(\bigcup_{i=1}^\ell N_{c_i}\)
		respecting \(S_t\). For this, we argue by induction that \(\bigcup_{i=1}^{j-1} N_{c_i}\)
		can be contracted while preserving the required width. This claim is trivial for \(j=1\).
		Hence assume we have already contracted \(\bigcup_{i=1}^{j-1}N_{c_i}\) to a set \(B_{j-1}\)
		of size at most \(2^{|S_t|}\). The vertices of \(B_{j-1}\) may be connected via red edges
		to vertices in \(B_{j-1}\) itself and to vertices in \(P_t\setminus S_t\). Thus, the red degree
		of vertices in \(B_{j-1}\) is bounded by
		\[|B_{j-1}|-1+|P_t|-|S_t|\leq 2^{|S_t|}+|P_t|-|S_t|-1\leq 2^k+w-k,\]
		where we used the fact that the function \(n\mapsto 2^n-n\) is non-decreasing on the natural numbers.
		The red degree of vertices in \(P_t\setminus S_t\) is bounded by
		\(|B_{j-1}|\leq 2^{|S_t|}\leq 2^k\).
		
		Now, we want to apply a maximal contraction sequence of \(N_{c_j}\) respecting \(S_t\).
		If \(S_{c_j}\subseteq S_t\), we do not need any contractions.
		Otherwise, pick an arbitrary vertex \(s\in S_{c_j}\setminus S_t\)
		and contract all pairs of vertices in \(N_{c_j}\) whose neighborhood differs only in \(s\).
		Because such a contraction will not create red edges to any vertex besides \(s\),
		and whenever this happens, the set \(N_{c_j}\) shrinks,
		the red degree of all vertices in \(N_{c_j}\) stays bounded by \(|N_{c_j}|-1\leq 2^k-1\) during this.
		But now we have contracted \(N_{c_j}\) to a set of size at most \(2^{k-1}\). If we now apply
		an arbitrary maximal contraction sequence of the remaining quotient of \(N_{c_j}\) respecting \(S_t\),
		we can get arbitrary red edges between \(N_{c_j}\) and \(S_{c_j}\). Still, the red degree of
		all vertices in \(N_{c_j}\) stays bounded by \(2^{k-1}-1+k\leq 2^k-1\).
		Let \(\bar{N}_{c_j}\) be the resulting quotient of \(N_{c_j}\).		
		In order to bound the red degree of vertices in \(P_t\setminus S_t\) during these contractions,
		we observe that every red neighbor of a vertex in \(P_t\setminus S_t\) in the quotient of \(N_{c_j}\)
		must be the contraction of at least two vertices. Thus, the red degree in \(P_t\setminus S_t\) is bounded by
		\[|B_{j-1}|+|N_{c_j}|/2\leq 2^k+2^{k-1}=3\cdot 2^{k-1}.\]
		
		Next, we contract vertices from \(B_{j-1}\) and \(\bar{N}_{c_j}\)
		which have equal neighborhoods in \(S_t\). As our bounds already allow
		every vertex in \(P_t\setminus S_t\) to be connected via red edges to all of \(B_{j-1}\cup\bar{N}_{c_j}\),
		it suffices to argue that this keeps the red degree of vertices in \(B_{j-1}\cup\bar{N}_{c_j}\),
		which has size at most \(2^{|S_t|}+2^{|S_t\cap S_{c_j}|}\leq 2^{|S_t|}+2^{k-1}\).
		Here we used that by our assumption, all nodes~\(t,t'\in V(T)\) with \(S_t=S_{t'}\) are siblings, which implies \(S_t\neq S_{c_j}\).		
		Hence, after one contraction, the red degree is bounded by
		\[|B_{j-1}|+|\bar{N}_{c_j}|+|P_t|-|S_t|-2\leq 3\cdot 2^{k-1}+w-k-1,\]
		because there are just not enough vertices left to break this bound.
		Here, we again used that \(2^{|S_t|}-|S_t|\leq 2^k-k\) by monotonicity.
		We have now successfully contracted \(N_{c_j}\) into \(B_{j-1}\) while keeping the red degree bounded by
		\[\max(2^k+w-k,2^k-1, 3\cdot 2^{k-1}, 3\cdot 2^{k-1}+w-k-1)=3\cdot 2^{k-1}+\max(w-k-1,0).\]		
		
		By repeating this procedure for all \(j\in[\ell]\), we find a contraction sequence
		of \(\bigcup_{i=1}^\ell N_{c_i}\) respecting \(S_t\) within this width.
		The resulting graph thus consists of \(S_t\), the vertices of~\(P_t\setminus S_t\)
		and the vertices from \(B_\ell\). In total, these are at most
		\(2^{|S_t|}+|P_t|-|S_t|\leq 2^k+w+1-k\) vertices besides those in \(S_t\).
		These can further be contracted while keeping the red degree bounded by
		\[2^k+w-k-1\leq 3\cdot 2^{k-1}+w-k-1.\]
		In total, our contraction sequence thus has width at most
		\(3\cdot 2^{k-1}+\max(w-k-1,0),\)
		proving the claim.
	\end{proof}
\end{lemma}

By combining Lemma~\ref{Lemma: Bound on twin-width of graphs with small width and adhesion} with
Lemma~\ref{Lemma: Bound twin-width by twin-width of blocks},
we obtain a general bound on the twin-width of graphs admitting a tree decomposition of bounded
width and adhesion: 
\Twwvsadhesion*

This upper bounds sharpens the bound given in \cite{bounding-twwa} by making explicit the dependence on the adhesion of the tree decomposition.
Our bound shows that, while the twin-width in general can be exponential in the tree-width \cite{twwexptwa}, the exponential dependence comes from the adhesion of the tree decomposition
and not from the width itself.

Moreover, up to multiplicative factors, our upper bound matches the known lower bounds.
As already mentioned, it is known that there are graphs whose twin-width is exponential in the adhesion of some tree decomposition~\cite{twwexptwa}. Adding a Paley graph into some bag whose twin-width is linear in its order shows that also the linear width term cannot be significantly improved upon.

\subsection{Simplifying the parts}
Before we apply this general lemma to the special case of the tree of tri-
or quasi-4-connected components, we show that we can simplify the gadgets attached
in the graphs \(\widetilde{P}\) to all separators while raising the twin-width by
at most a constant factor.

In a first step, we replace the sets \(N_{c_i}\) from the definition of the parts \(\widetilde{P}_t\)
by a single common red neighbor for every separator.
For every node \(t\in V(T)\) with children \(c_1,\dots,c_\ell\),
we define the trigraph~\(\widehat{P}_t\) as follows:
we set \(\mathcal{S}(t)\coloneqq\{S_{c_i}\colon i\in[\ell], S_{c_i}\not\subsetneq S_{c_j} \text{ for all } j\in[\ell]\}\)
to be the set of subset-maximal child separators of \(P_t\).
Now, we take a collection of fresh vertices \(V_{\mathcal{S}}\coloneqq\{v_S\colon S\in\mathcal{S}(t)\}\)
and set \[V(\widehat{P}_t)\coloneqq V(P_t)\dotcup V_{\mathcal{S}}.\]
The subgraph induced by \(\widehat{P}_t\) on \(V(P_t)\) is \(P_t\) itself.
The vertex \(v_S\) is connected via red edges to all vertices in \(S\) and has no further neighbors.
A drawing of the gadget attached to~\(S_{c_i}\) in \(\widehat{P}_t\) can be found in Figure~\ref{figure: gadgets attached to parts}.

\begin{lemma}\label{Lemma: Bound twin-width by twin-width of blocks, simpler gadgets} 
	Let \(G\) and \(\mathcal{T}\) be as before. Then for every \(t\in V(T)\), it holds that
	\[\tww(\widetilde{P}_t,S_t)\leq \max(2^k\tww(\widehat{P}_t)+2^{k+1}-2, 4^k+2^k-2).\]
	In particular,
	\(\tww(G)\leq \max(2^k\max_{t\in V(T)}\tww(\widehat{P}_t)+2^{k+1}-2,4^k+2^k-2).\)

	\begin{proof}
		Consider a separator~\(S\in\mathcal{S}(t)\) in the graph~\(\widetilde{P}_t\) and assume
		~\(t\) has more than one child~\(c\) with \(S_c\subseteq S\). If \(c\) and \(c'\) are such
		children, we can contract vertices from \(N_c\) and \(N_{c'}\) with the same
		neighborhood in \(S\) as long as these exist. By doing this for all children whose parent
		separator is contained in \(S\), we can reduce to the case that there is only a single such child
		using a contraction sequence of width at most \(2^{k+1}-2\).
		Thus, in the following assume that \(S_c\not\subseteq S_{c'}\) for every two distinct children
		\(c\) and \(c'\) of \(t\).
		
		By Corollary~\ref{Corollary: Bound of pointed twin-width by twin-width},
		it holds that \(\tww(\widetilde{P}_t,S_t)\leq 2^k\tww(\widetilde{P}_t)+2^{k+1}-2\).
		Thus, we want to bound \(\tww(\widetilde{P}_t)\).		
		For this, let \(c_1,\dots,c_\ell\) be the children of \(t\)
		and choose a vertex \(x_i\in N_{c_i}\) for every~\(i\in[\ell]\).
		We show that we can contract all vertices of \(N_{c_i}\) into \(x_i\) one after the other
		in such a way that the red degree of~\(x_i\) stays bounded by \(2^k-1\) while
		we do not create red edges between vertices of \(P_t\) and vertices of~\(N_{c_i}\) besides \(x_i\).
		
		Indeed, let \(S_{c_i}=\{s_1,\dots,s_{k'}\}\). If \(k'<k\), then we can contract the vertices of \(N_{c_i}\)
		into~\(x_i\) in an arbitrary way while keeping the red degree bounded by \(2^{k'}+k'\leq 2^k-1\).
		Hence, assume~\(k'=k\). Now, let \(M\coloneqq N(x_i)\cap S_{c_i}\) be the neighborhood of~\(x_i\)
		in the separator~\(S_{c_i}\) and define \(M_j\coloneqq M\symdiff\{s_1,\dots,s_j\}\).
		For all \(j\in[k]\), there exists at most one vertex \(y_j\coloneqq s^{c_i}_{M_j}\in N_{c_i}\) such that
		\(N(y_j)\cap S_{c_i}=M_j\). We first show that
		we can contract the vertices \(y_1,\dots,y_k\) into \(x_i\) one after the other
		without exceeding our bound of \(2^k-1\). For this, we note that
		after having contracted \(y_1,\dots,y_j\) into~\(x_i\),
		the remaining red neighbors of \(x_i\) are among
		\(\{s_1,\dots,s_j\}\cup (N_{c_i}\setminus\{x_i,y_1,\dots,y_j\})\).
		Now, whether or not \(y_j\) exists, the exclusion of~\(y_j\) removes
		one of the possible \(2^{|S_{c_i}|}\) many vertices in \(N_{c_i}\).
		Thus,~\(x_i\) has at most~\((2^k-1)\)-many red neighbors.
		
		After having merged \(y_1,\dots,y_k\) into \(x_i\), this possible neighborhood
		includes all the vertices in \(S_{c_i}\). Thus, we can now contract
		the remaining vertices of \(N_{c_i}\) in an arbitrary way without exceeding
		the bound of \(2^k-1\) on the red degree of \(x_i\).
		
		By applying this procedure for all \(i\in[\ell]\), we find a contraction sequence
		which contracts~\(\widetilde{P}_t\) to \(\widehat{P}_t\) using width at most \(2^k-1\).
		Thus, we have
		\[\tww(\widetilde{P}_t)\leq\max(2^k-1,\tww(\widehat{P}_t)).\]
		
		Using Corollary~\ref{Corollary: Bound of pointed twin-width by twin-width}, we thus have
		\begin{align*}
		\tww(\widetilde{P}_t,S_t)
		&\leq 2^k\tww(\widetilde{P}_t)+2^{k+1}-2\\
		&\leq 2^k\max(2^k-1,\tww(\widehat{P}_t))+2^{k+1}-2\\
		&=    \max\left(4^k+2^k-2, 2^k\tww(\widehat{P}_t)+2^{k+1}-2\right).
		\end{align*}
		
		The second claim follows from Lemma~\ref{Lemma: Bound twin-width by twin-width of blocks}
		together with the observation that \(\tww(G)=\tww(G,S_r)\)
		for the root separator \(S_r=\emptyset\) of \(\mathcal{T}\).
	\end{proof}
\end{lemma}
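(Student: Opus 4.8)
The plan is to prove the per-vertex bound $\tww(\widetilde{P}_t,S_t)\leq \max(2^k\tww(\widehat{P}_t)+2^{k+1}-2, 4^k+2^k-2)$ first, and then read off the global bound from Lemma~\ref{Lemma: Bound twin-width by twin-width of blocks}. Fix $t\in V(T)$ with children $c_1,\dots,c_\ell$. The two gadget constructions differ in exactly two ways: in $\widetilde{P}_t$ we attach a separate red clique $N_{c_i}$ to \emph{every} child separator (even ones contained in another), whereas $\widehat{P}_t$ keeps only the subset-maximal ones; and each $N_{c_i}$ is a red clique on up to $2^{|S_{c_i}|}$ vertices, whereas $\widehat{P}_t$ attaches a single red-connected vertex $v_{S_{c_i}}$. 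I would remove these two discrepancies one after the other.

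First I would collapse comparable separators. If $c$ and $c'$ are children with $S_c\subseteq S_{c'}$, then every vertex of $N_c$ realizes some neighborhood in $S_{c'}$, so I can repeatedly contract a vertex of $N_c$ with a vertex of $N_{c'}$ having the same neighborhood in $S_{c'}$ until the survivors have pairwise distinct $S_{c'}$-neighborhoods; iterating over all such pairs moves every gadget vertex onto a subset-maximal separator. The red edges touched here go only to other vertices of the same (now enlarged) clique and to $P_t\setminus S_t$, and since at most $2^k$ gadget vertices survive per maximal separator, the red degree stays bounded by $2^{k+1}-2$. So from now on I may assume the $S_{c_i}$ are pairwise incomparable, and hence $\widetilde{P}_t$ is built from $\widehat{P}_t$ only by blowing up each red-connected vertex $v_{S_{c_i}}$ into the red clique $N_{c_i}$.

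Next, by Corollary~\ref{Corollary: Bound of pointed twin-width by twin-width} it is enough to bound the unpointed twin-width: $\tww(\widetilde{P}_t,S_t)\leq 2^k\tww(\widetilde{P}_t)+2^{k+1}-2$. To bound $\tww(\widetilde{P}_t)$ I would contract each clique $N_{c_i}$ down to a single vertex $x_i$, one clique at a time; the resulting trigraph is precisely $\widehat{P}_t$ (possibly with some red edges still black, which only helps). The main obstacle — and the one place where care is genuinely needed — is ordering the contractions inside a single $N_{c_i}$ so that $x_i$ never reaches red degree above $2^k-1$ and so that no red edge ever appears between $N_{c_i}$ and $P_t\setminus S_{c_i}$ except at $x_i$. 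When $|S_{c_i}|<k$ this is immediate since $2^{|S_{c_i}|}+|S_{c_i}|\leq 2^k-1$. When $|S_{c_i}|=k$, write $S_{c_i}=\{s_1,\dots,s_k\}$, let $M=N(x_i)\cap S_{c_i}$ and $M_j=M\symdiff\{s_1,\dots,s_j\}$; I would first absorb into $x_i$ the at most one vertex $y_j$ of $N_{c_i}$ with $S_{c_i}$-neighborhood $M_j$, for $j=1,\dots,k$. The invariant is that after step $j$ the red neighbors of $x_i$ lie in $\{s_1,\dots,s_j\}\cup(N_{c_i}\setminus\{x_i,y_1,\dots,y_j\})$, and since each step deletes one of the $2^{|S_{c_i}|}$ possible $S_{c_i}$-neighborhoods from the clique, $x_i$ keeps at most $2^k-1$ red neighbors; after $k$ steps this ``possible-neighbor'' set already contains all of $S_{c_i}$, so the remaining vertices of $N_{c_i}$ can be folded into $x_i$ in any order. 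Doing this for all $i$ turns $\widetilde{P}_t$ into $\widehat{P}_t$ with width at most $2^k-1$, so $\tww(\widetilde{P}_t)\leq\max(2^k-1,\tww(\widehat{P}_t))$.

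Assembling the three bounds gives
$\tww(\widetilde{P}_t,S_t)\leq \max(2^{k+1}-2,\ 2^k\max(2^k-1,\tww(\widehat{P}_t))+2^{k+1}-2)=\max(4^k+2^k-2,\ 2^k\tww(\widehat{P}_t)+2^{k+1}-2)$,
which is the claimed per-vertex estimate. The ``in particular'' statement then follows by applying Lemma~\ref{Lemma: Bound twin-width by twin-width of blocks} at the root $t=r$ and using $\tww(G)=\tww(G,S_r)$ for the empty root separator $S_r$. I expect all the real subtlety to sit in the red-degree bookkeeping during the clique-collapse of step three; everything else is just careful tracking of which red edges a single contraction can create.
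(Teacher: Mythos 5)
Your proposal is correct and follows essentially the same route as the paper's own proof: first merging gadget vertices over comparable child separators (width $2^{k+1}-2$), then invoking Corollary~\ref{Corollary: Bound of pointed twin-width by twin-width} to pass from $\tww(\widetilde{P}_t,S_t)$ to $\tww(\widetilde{P}_t)$, then collapsing each clique $N_{c_i}$ into a chosen vertex $x_i$ via the same symmetric-difference ordering $M_j=M\symdiff\{s_1,\dots,s_j\}$ to keep the red degree of $x_i$ below $2^k$, and finally assembling the bounds and applying Lemma~\ref{Lemma: Bound twin-width by twin-width of blocks} at the root. No gaps worth noting.
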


Next, we want to define a version \(\overline{P}_t\) of the parts which does not need extra vertices in~\(P_t\)
but instead marks the separators via red cliques. Indeed, let \(\overline{P}_t\) be the trigraph obtained from
\(P_t\) by turning each of the sets \(S\in\mathcal{S}_t\) into a red clique.
Thus, apart from possibly some missing edges within the parent separator \(S_t\),
the underlying graphs of the trigraphs \(\overline{P}_t\) are the \emph{torsos} of the tree decomposition.
We thus call the graphs \(\overline{P}_t\) the \emph{red torsos} of the tree decomposition \(\mathcal{T}\).

In order to obtain a bound on the twin-width of \(G\) in terms of the twin-width of the red torsos \(\overline{P}_t\),
we need one combinatorial lemma, which is a variant of Sperner's theorem \cite{LYM}.
\begin{lemma}\label{lemma: Sperner's theorem}
	Let \(\mathcal{F}\subseteq\mathcal{P}([n])\) be a family of subsets of \([n]\)
	such that every set has size at most \(k\)
	and no set is contained in another.
	\begin{enumerate}
		\item If \(k\leq\lfloor n/2\rfloor\), then \(|\mathcal{F}|\leq \binom{n}{k}\).
		\item If \(k>\lfloor n/2\rfloor\), then \(|\mathcal{F}|\leq \binom{n}{\lfloor n/2\rfloor}\leq\binom{2k-1}{k}\).
	\end{enumerate}
	\begin{proof}
	By the LYM-inequality \cite{LYM}, it holds that
	\[\sum_{A\in\mathcal{F}}\frac{1}{\binom{n}{|A|}}\leq 1.\]
	Furthermore, we also know that
	\[\sum_{A\in\mathcal{F}}\frac{1}{\binom{n}{|A|}}
	\geq|\mathcal{F}|\cdot \frac{1}{\max_{i\leq k}\binom{n}{i}}
	=\begin{cases}
	\frac{|\mathcal{F}|}{\binom{n}{k}} &\text{if } k\leq\lfloor n/2\rfloor,\\
	\frac{|\mathcal{F}|}{\binom{n}{\lfloor n/2\rfloor}} &\text{if } k>\lfloor n/2\rfloor.\\
	\end{cases}
	\]
	Combining these two inequalities yields everything but the very last inequality of the claim.
	This inequality follows from the fact that when \(k>\lfloor n/2\rfloor\), then we also have \(k>n/2\) and thus \(n\leq 2k-1\).
	\end{proof}
\end{lemma}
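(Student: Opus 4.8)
The plan is to derive both bounds from the LYM inequality \cite{LYM}, exactly as the citation accompanying the statement suggests. Recall that LYM asserts that for any antichain $\mathcal{F}$ in the Boolean lattice on $[n]$ one has $\sum_{A\in\mathcal{F}}\binom{n}{|A|}^{-1}\leq 1$. Since $\mathcal{F}$ contains no two comparable sets, it is an antichain, so this applies. The left-hand side is a sum of positive terms, each of which I would bound below by $1/\max_{i\leq k}\binom{n}{i}$, using that every $A\in\mathcal{F}$ satisfies $|A|\leq k$. This immediately yields $|\mathcal{F}|\leq\max_{i\leq k}\binom{n}{i}$, and it remains only to evaluate this maximum.

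Here I would invoke the unimodality of the sequence $\binom{n}{0},\binom{n}{1},\dots,\binom{n}{n}$: it is strictly increasing up to index $\lfloor n/2\rfloor$ and then weakly decreasing. Consequently, if $k\leq\lfloor n/2\rfloor$, the maximum over $i\leq k$ is attained at $i=k$, giving $|\mathcal{F}|\leq\binom{n}{k}$; and if $k>\lfloor n/2\rfloor$, the maximum is attained at $i=\lfloor n/2\rfloor$, giving $|\mathcal{F}|\leq\binom{n}{\lfloor n/2\rfloor}$. Both cases of the statement follow, except for the final inequality $\binom{n}{\lfloor n/2\rfloor}\leq\binom{2k-1}{k}$ in the second case.

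For that last inequality I would first note that $k>\lfloor n/2\rfloor$ forces $k>n/2$, hence $n\leq 2k-1$. Then I would use that the central binomial coefficient $m\mapsto\binom{m}{\lfloor m/2\rfloor}$ is nondecreasing in $m$ — a one-line consequence of Pascal's rule — so that $\binom{n}{\lfloor n/2\rfloor}\leq\binom{2k-1}{\lfloor(2k-1)/2\rfloor}=\binom{2k-1}{k-1}=\binom{2k-1}{k}$, the last equality being the symmetry of binomial coefficients. This closes the argument.

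I do not expect a genuine obstacle: the lemma is a routine packaging of standard extremal set theory, and the only points needing care are (i) correctly identifying where $\max_{i\leq k}\binom{n}{i}$ is attained via unimodality and (ii) the elementary monotonicity of central binomial coefficients used in the final estimate. As an alternative to LYM one could instead take a symmetric chain decomposition of $\mathcal{P}([n])$ into $\binom{n}{\lfloor n/2\rfloor}$ chains and count how many of them reach a level of size at most $k$ (which is again $\binom{n}{k}$ when $k\leq\lfloor n/2\rfloor$, since an antichain meets each chain at most once); but the LYM route is shorter and self-contained given the cited inequality, so that is the one I would write up.
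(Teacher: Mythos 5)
Your proposal is correct and follows essentially the same route as the paper: apply the LYM inequality, bound each summand below by $1/\max_{i\leq k}\binom{n}{i}$, evaluate the maximum via unimodality, and derive the final estimate from $n\leq 2k-1$. The only difference is cosmetic -- you spell out the monotonicity of central binomial coefficients, which the paper leaves implicit.
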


\begin{corollary}\label{corollary: bound red degree in hypergraph by red degree in torso}
Let \(H\) be a graph, \(\mathcal{F}\subseteq\mathcal{P}(V(H))\) and \(k\geq 2\) such that
\begin{enumerate}
\item every set in \(\mathcal{F}\) contains at most \(k\) vertices,
\item no two sets in \(\mathcal{F}\) are contained in each other, and
\item for each \(A\in\mathcal{F}\), the graph \(H[A]\) is a clique.
\end{enumerate}
Then every vertex \(x\in V(H)\) is contained in at most
\(\max\left(\binom{\Delta(H)}{k-1},\binom{2k-3}{k-1}\right)\)
sets of \(\mathcal{F}\).
\begin{proof}
For a vertex \(x\in V(H)\),
consider the family \(\mathcal{F}_{-x}\coloneqq\{A\setminus\{x\}\colon A\in \mathcal{F}, x\in A\}\),
whose cardinality is the number we want to bound.
All sets in \(\mathcal{F}_{-x}\) are subsets of \(N_H(x)\), have size at most \(k-1\) and do not contain
any other set in \(\mathcal{F}_{-x}\).
Thus, Lemma~\ref{lemma: Sperner's theorem} yields that
\[|\mathcal{F}_{-x}|\leq\max\left(\binom{|N_H(x)|}{k-1},\binom{2k-3}{k-1}\right)\leq \max\left(\binom{\Delta(H)}{k-1},\binom{2k-3}{k-1}\right).\qedhere\]
\end{proof}
\end{corollary}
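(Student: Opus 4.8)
The plan is to fix a vertex \(x\in V(H)\) and bound the number of members of \(\mathcal{F}\) containing \(x\) by reducing the question to the Sperner-type Lemma~\ref{lemma: Sperner's theorem} applied inside the neighborhood of \(x\). First I would dispose of the degenerate case \(\{x\}\in\mathcal{F}\): by the antichain condition no other member of \(\mathcal{F}\) can contain \(x\), so \(x\) lies in exactly one set of \(\mathcal{F}\), and \(1\leq\binom{2k-3}{k-1}\) holds since \(k\geq 2\). So the real work is the case \(\{x\}\notin\mathcal{F}\).

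In that case I would pass to the family \(\mathcal{F}_{-x}\coloneqq\{A\setminus\{x\}\colon A\in\mathcal{F},\, x\in A\}\). Distinct members of \(\mathcal{F}\) that contain \(x\) stay distinct after deleting \(x\), so \(|\mathcal{F}_{-x}|\) is exactly the quantity to be bounded, and I would verify three structural facts. (a) Every member of \(\mathcal{F}_{-x}\) has size at most \(k-1\), by the size condition on \(\mathcal{F}\). (b) Every member of \(\mathcal{F}_{-x}\) is a subset of \(N_H(x)\): if \(A\in\mathcal{F}\) with \(x\in A\), then the clique condition forces every vertex of \(A\setminus\{x\}\) to be adjacent to \(x\) (and none of them equals \(x\), as \(H\) is loopless). (c) \(\mathcal{F}_{-x}\) is an antichain: if \(A\setminus\{x\}\subseteq B\setminus\{x\}\) for \(A,B\in\mathcal{F}\) both containing \(x\), then adding \(x\) back gives \(A\subseteq B\), and the antichain condition on \(\mathcal{F}\) forces \(A=B\).

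Thus \(\mathcal{F}_{-x}\) is an antichain of subsets of \(N_H(x)\), each of size at most \(k-1\), and \(|N_H(x)|\leq\Delta(H)\). I would then apply Lemma~\ref{lemma: Sperner's theorem} with ground set \(N_H(x)\), of size \(n\coloneqq|N_H(x)|\), and with the parameter \(k-1\) in place of \(k\): when \(k-1\leq\lfloor n/2\rfloor\) this gives \(|\mathcal{F}_{-x}|\leq\binom{n}{k-1}\leq\binom{\Delta(H)}{k-1}\), and when \(k-1>\lfloor n/2\rfloor\) it gives \(|\mathcal{F}_{-x}|\leq\binom{2(k-1)-1}{k-1}=\binom{2k-3}{k-1}\) (this also covers the trivial subcase \(\Delta(H)<k-1\)). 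In either case \(|\mathcal{F}_{-x}|\leq\max\bigl(\binom{\Delta(H)}{k-1},\binom{2k-3}{k-1}\bigr)\), which is the claim. I do not expect a genuine obstacle here: the only points needing care are that the antichain property survives deletion of \(x\) (step (c)) and that the two cases of the Sperner variant combine cleanly into the stated maximum — both are routine once the reduction to \(N_H(x)\) has been set up.
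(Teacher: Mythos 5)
Your proposal is correct and follows essentially the same route as the paper: dispose of the case \(\{x\}\in\mathcal{F}\), pass to the antichain \(\mathcal{F}_{-x}\subseteq\mathcal{P}(N_H(x))\) of sets of size at most \(k-1\), and apply Lemma~\ref{lemma: Sperner's theorem} with parameter \(k-1\). Your verification of the three structural facts (size, containment in \(N_H(x)\) via the clique condition, and preservation of the antichain property) is exactly the argument the paper uses, just spelled out in slightly more detail.
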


\begin{lemma}\label{Lemma: Bound twin-width by twin-width of blocks, torso version}
	If \(k\geq 2\), then for every \(t\in V(T)\), it holds that
	\[\tww(\widehat{P}_t)\leq \max\left(k+1,\tww(\overline{P}_t)+\binom{\tww(\overline{P}_t)}{k-1},\tww(\overline{P}_t)+\binom{2k-3}{k-1}\right).\]
	In particular,
	\begin{align*}
		\tww(G)\leq
		\max\left(
		\begin{array}{l}
			2^k\max_{t\in V(T)}\left(\tww(\overline{P}_t)+\binom{\tww(\overline{P}_t)}{k-1}\right)+2^{k+1}-2,\\
			2^k\max_{t\in V(T)}\tww(\overline{P}_t)+2^k\binom{2k-3}{k-1}+2^{k+1}-2,\\
			4^k+2^k-2
		\end{array}
		\right)
	\end{align*}

	\begin{proof}
		Let \((x_iy_i)_{i<|\overline{P}_t|}\) be a minimal contraction sequence of \(\overline{P}_t\),
		which we can also interpret as a contraction sequence of \(P_t\).
		Recall that in the graph \(\widehat{P}_t\), we marked the child separators of \(P_t\) by
		adding extra vertices \(v_S\) for every subset-maximal child separator \(S\) which we connected
		to all vertices in \(S\) via red edges. As before, we denote the set of these
		extra vertices by \(V_{\mathcal{S}}\) and want to construct
		a contraction sequence for \(\widehat{P}_t\) from the sequence \((x_iy_i)_{i<|\overline{P}_t|}\)
		by interleaving this sequence with appropriate contractions within \(V_{\mathcal{S}}\).
		We will not contract vertices from \(V(P_t)\) with vertices in \(V_{\mathcal{S}}\)
		until the very last contraction. This means that it makes sense to talk about
		vertices in the quotient of \(V_{\mathcal{S}}\), that is, contractions of
		vertices in \(V_\mathcal{S}\).
		The contraction sequence can be found in Algorithm~\ref{alg: contract hat(P)_t}.
		\begin{algorithm*} 
		\For{\(i<|\overline{P}_t|\)}{
			\While{there exist distinct vertices \(v_S\) and \(v_{S'}\) in the quotient of \(V_{\mathcal{S}}\)
				such that \(N(v_S)/{x_iy_i}\subseteq N(v_{S'})/{x_iy_i}\)	\label{line: beginning of for-loop}}{
				contract \(v_S\) and \(v_{S'}\)
			}
			contract \(x_i\) and \(y_i\)
		}
		contract the remaining two vertices
		\caption{\textsc{Contract}($\widehat{P}_t$, \((x_iy_i)_{i<|\overline{P}_t|})\)}
		\label{alg: contract hat(P)_t}
		\end{algorithm*}
		
		To see that this contraction sequence does indeed contract \(\widehat{P}_t\) to a single vertex,
		note that after having exited the for-loop, we have applied all contractions \(x_iy_i\),
		which means that~\(P_t\) was contracted to a single vertex. But then each two vertices \(v_S\)
		and \(v_{S'}\) in the quotient of~\(V_{\mathcal{S}}\) have identical neighborhood in \(V(P_t)\),
		which means that they were contracted before.
		
		Now, in order to bound the width of this sequence, we first argue that we preserve the loop-invariant
		that at the beginning of Line~\ref{line: beginning of for-loop}, no two vertices \(v_S\) and \(v_{S'}\)
		have neighborhoods that contain each other. In the uncontracted graph \(\widehat{P}_t\), this is true
		by construction, as we only added a vertex \(v_S\) for every subset-maximal child separator \(S\).
		If the invariant is true at the start of the \(i\)-th iteration, then the process in the while-loop
		precisely ensures that the property is also true at the start of the \((i+1)\)-th iteration.
		
		Next, we argue that the red-degree of all vertices \(v_S\) is bounded by \(k\) at the beginning of each iteration
		and bounded by \(k+1\) during the whole sequence. Moreover, at the start of each iteration, the (red) neighborhood
		of each vertex \(v_S\) forms a red clique in \(\overline{P}_t\).
		Again, this is true by construction in the uncontracted graph \(\widehat{P}_t\).
		If this invariant is true at the beginning of the \(i\)-th iteration, we show that it is also true at the beginning of the \((i+1)\)-th iteration.
		For this, we note that whenever we contract a vertex \(v_S\) into a vertex \(v_{S'}\) because \(N(v_S)/{x_iy_i}\subseteq N(v_{S'})/{x_iy_i}\),
		then both neighborhoods contain either \(x_i\) or \(y_i\) (or both) and both \(x_i\) and \(y_i\) are contained in one
		of the two neighborhoods.
		Thus, the neighborhood of the contracted vertex contains both~\(x_i\) and~\(y_i\) and its size is bounded by
		\(|N(v_{S'})\cup\{x_i,y_i\}|\leq k+1\). Moreover, all edges in \(\overline{P}_t[N(v_{S'})\cup\{x_i,y_i\}]\)
		besides possibly \(x_iy_i\) are red, which means that after contracting \(x_i\) and \(y_i\),
		the neighborhood of the contraction of \(v_S\) and \(v_{S'}\) is again a red clique in \(\overline{P}_t\).
		
		It remains to bound the red degree of the vertices in (the quotients of) \(P_t\).
		For this, we note that during the while-loop of Algorithm~\ref{alg: contract hat(P)_t},
		the red degree of vertices in \(P_t\) can only decrease. Thus, it suffices to bound the red degree
		at the start of each iteration. The red neighbors of vertices in \(P_t\) come in two sorts:
		red neighbors in \(P_t\) itself, stemming from the contraction sequence \((x_iy_i)_{i<|\overline{P}_t|}\),
		and red neighbors among the vertices in \(V_{\mathcal{S}}\).
		
		The red degree that a vertex \(x\) in \(P_t\) can obtain within \(P_t\) is bounded by the width of the
		sequence \((x_iy_i)_{i<|\overline{P}_t|}\) which is \(\tww(\overline{P}_t)\).
		To bound the red degree that a vertex \(x\) in \(P_t\) can get from vertices in \(V_{\mathcal{S}}\),
		let \(Q_i\) be the partially contracted graph that we have at the start of the \(i\)-th iteration
		and set \(\mathcal{F}_i\coloneqq\{N_{Q_i}(v_S)\colon v_S\in V_{\mathcal{S}}\}\).
		By Corollary~\ref{corollary: bound red degree in hypergraph by red degree in torso}, the number
		of red neighbors of \(x\) among the vertices \(v_S\) is bounded by
		\[  \max\left(\binom{\Delta_{\red}(Q_i)}{k-1},\binom{2k-3}{k-1}\right)
		\leq\max\left(\binom{\tww(\overline{P}_t)}{k-1},\binom{2k-3}{k-1}\right).\]

		Combining these bounds, we get that the red degree of the contraction sequence of \(\widehat{P}_t\) is bounded by
		\[\max\left(k+1,\tww(\overline{P}_t)+\binom{\tww(\overline{P}_t)}{k-1}, \tww(\overline{P}_t)+\binom{2k-3}{k-1}\right).\]
				
		The second claim follows by inserting this bound into the bound in Lemma~\ref{Lemma: Bound twin-width by twin-width of blocks, simpler gadgets}.
	\end{proof}
\end{lemma}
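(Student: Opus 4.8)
The plan is to take a width-minimal contraction sequence $(x_iy_i)_{i<|\overline{P}_t|}$ of the red torso $\overline{P}_t$ and lift it to a contraction sequence of $\widehat{P}_t$. Since $\overline{P}_t$ has vertex set exactly $V(P_t)$, the pairs $x_i,y_i$ are vertices of $P_t$, so the same contractions can be carried out inside $\widehat{P}_t$; the only freedom -- and the only real difficulty -- is to decide when to remove the gadget vertices $v_S$, $S\in\mathcal{S}(t)$, without letting them inflate the red degree. First I would interleave as follows: right before performing the $i$-th contraction $x_iy_i$, repeatedly merge any two surviving gadget vertices whose red neighbourhoods would become nested once $x_iy_i$ is performed, and only afterwards perform $x_iy_i$. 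At the end, after all $x_iy_i$ have been applied, $P_t$ has been contracted to a single blob, every surviving $v_S$ has the same neighbourhood, so they have all been merged into one vertex, which we contract with the $P_t$-blob. This shows the lifted sequence is a complete contraction sequence of $\widehat{P}_t$.

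For the width bound I would maintain, at the beginning of every iteration, the invariants that (i) no two surviving gadget vertices have nested red neighbourhoods, (ii) every surviving $v_S$ has red degree at most $k$ and its red neighbourhood induces a red clique, and (iii) every vertex of $P_t$ has at most $\tww(\overline{P}_t)$ red neighbours inside $P_t$. Invariant (i) is exactly what the inner while-loop restores. Invariant (ii) is preserved because, when two gadget vertices are merged due to their neighbourhoods becoming nested after $x_iy_i$, the merged neighbourhood is contained in $N(v_{S'})\cup\{x_i,y_i\}$; once $x_i$ and $y_i$ are identified this set has size at most $k+1$ and is again a red clique, so a gadget vertex never exceeds red degree $k+1$ during the sequence and is back to red degree at most $k$ at the start of the next iteration. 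Invariant (iii) holds because the red edges created inside $P_t$ while running the sequence on $\widehat{P}_t$ form a subset of those created while running it on $\overline{P}_t$ -- adding the separator red cliques in $\overline{P}_t$ can only make pairs of blobs more heterogeneous -- so their number stays bounded by $\tww(\overline{P}_t)$.

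The main obstacle is that a single vertex $x\in V(P_t)$ could in principle belong to very many child separators and hence acquire a huge red degree from the gadget vertices $v_S$; this is precisely where I would invoke the Sperner-type estimate. Applying Corollary~\ref{corollary: bound red degree in hypergraph by red degree in torso} to the graph $H$ on vertex set $V(P_t)$ whose edges are the red edges currently present inside $P_t$, together with the family $\mathcal{F}$ of red neighbourhoods of the surviving gadget vertices: by (i) and (ii) the members of $\mathcal{F}$ have size at most $k$, no one contains another, and each induces a clique in $H$, while by (iii) we have $\Delta(H)\leq\tww(\overline{P}_t)$; hence every $x$ lies in at most $\max\big(\binom{\tww(\overline{P}_t)}{k-1},\binom{2k-3}{k-1}\big)$ of them. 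Consequently the red degree of a $P_t$-vertex stays at most $\tww(\overline{P}_t)+\max\big(\binom{\tww(\overline{P}_t)}{k-1},\binom{2k-3}{k-1}\big)$ and the red degree of a gadget vertex at most $k+1$, so the lifted sequence has width at most $\max\big(k+1,\tww(\overline{P}_t)+\binom{\tww(\overline{P}_t)}{k-1},\tww(\overline{P}_t)+\binom{2k-3}{k-1}\big)$; the degenerate case $k=1$, where $\overline{P}_t=P_t$ and $\widehat{P}_t$ is just $P_t$ with a matching of pendant red vertices, is handled directly. Feeding this estimate into Lemma~\ref{Lemma: Bound twin-width by twin-width of blocks, simpler gadgets} and using $\tww(G)=\tww(G,S_r)$ for the root separator then yields the displayed bound on $\tww(G)$.
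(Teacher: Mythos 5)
Your proposal is correct and follows essentially the same route as the paper's proof: the same interleaved algorithm (merging gadget vertices whose neighbourhoods become nested just before each torso contraction $x_iy_i$), the same invariants (non-nested red-clique neighbourhoods of size at most $k$, momentarily $k+1$; red degree inside $P_t$ bounded by $\tww(\overline{P}_t)$), the same application of Corollary~\ref{corollary: bound red degree in hypergraph by red degree in torso}, and the same final combination with Lemma~\ref{Lemma: Bound twin-width by twin-width of blocks, simpler gadgets}. Your explicit choice of $H$ as the current red graph on the quotient of $P_t$ is, if anything, a slightly cleaner phrasing of the step the paper performs with $\Delta_{\red}(Q_i)$.
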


Combining Lemma~\ref{Lemma: Bound twin-width by twin-width of blocks, simpler gadgets}
and Lemma~\ref{Lemma: Bound twin-width by twin-width of blocks, torso version}, we get the following
two asymptotic bounds on the twin-width of a graph admitting a tree decomposition of small adhesion.
{
\renewcommand{\thetheorem}{\ref*{Theorem: Bound twin-width by twin-width of blocks, hat + torso version}}
\addtocounter{theorem}{-1}%
\begin{theorem}
	For every \(k \in \mathbb{N}\) there exist explicit constants \(D_k\) and \(D_k'\) such that
	for every graph \(G\) with a tree decomposition of adhesion \(k\) and parts \(P_1,P_2,\dots,P_\ell\),
	the following statements are satisfied:
	\begin{enumerate}
	\item \(\displaystyle\tww(G)\leq 2^k\max_{i\in[\ell]}\tww(\widehat{P}_i)+D_k\),
	\item if \(k\geq 4\), then \(\displaystyle\tww(G)\leq \frac{2^k}{(k-1)!}\max_{i\in[\ell]} \tww(\overline{P}_i)^{k-1}+D_k'\).
	\end{enumerate}
\end{theorem}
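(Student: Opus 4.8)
The plan is to assemble the detailed bounds already established and repackage them into the advertised form, folding every lower-order term into the constants $D_k$ and $D_k'$; there is no new combinatorics to do. For the first statement I would just invoke the ``in particular'' clause of Lemma~\ref{Lemma: Bound twin-width by twin-width of blocks, simpler gadgets}, namely
\[\tww(G)\leq\max\bigl(2^k\max_{i\in[\ell]}\tww(\widehat{P}_i)+2^{k+1}-2,\ 4^k+2^k-2\bigr).\]
Since $\tww(\widehat{P}_i)\geq 0$, the summand $4^k+2^k-2$ does not depend on the first argument of the maximum, so the whole expression is at most $2^k\max_i\tww(\widehat{P}_i)+\max(2^{k+1}-2,\,4^k+2^k-2)$, and for $k\geq 1$ the latter maximum equals $4^k+2^k-2$; take this as $D_k$. (The value $k=0$ is degenerate: an adhesion-$0$ tree decomposition makes $G$ the disjoint union of its parts and each $\widehat{P}_i$ differs from $P_i$ only by isolated vertices, so $D_0\coloneqq 0$ works.)

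For the second statement I would use the ``in particular'' clause of Lemma~\ref{Lemma: Bound twin-width by twin-width of blocks, torso version}, which, with $M\coloneqq\max_{i\in[\ell]}\tww(\overline{P}_i)$, bounds $\tww(G)$ by the maximum of the three quantities $2^k\bigl(M+\binom{M}{k-1}\bigr)+2^{k+1}-2$, $\ 2^kM+2^k\binom{2k-3}{k-1}+2^{k+1}-2$, and $4^k+2^k-2$. The last two are a constant and a linear-plus-constant function of $M$, and since $k-1\geq 2$ both are at most $\frac{2^k}{(k-1)!}M^{k-1}+\mathrm{const}$, using the elementary inequality $M\leq\frac{M^{k-1}}{(k-1)!}+c_k$ obtained by completing the square. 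For the first quantity the bound $\binom{M}{k-1}\leq\frac{M^{k-1}}{(k-1)!}$ leaves a stray linear term $2^kM$; I would absorb it using the expansion $\binom{M}{k-1}=\frac{1}{(k-1)!}\bigl(M^{k-1}-\binom{k-1}{2}M^{k-2}+\cdots\bigr)$, whose negative term of degree $k-2$ dominates $M$ once $k\geq 4$, so that $M+\binom{M}{k-1}\leq\frac{M^{k-1}}{(k-1)!}+D$ for some $D=D(k)$; the remaining case $k=3$ is the quasi-4-connected one and is obtained from the more careful direct argument used in the proof of Theorem~\ref{Theorem: Bound on twin-width by twin-width of quasi-4-connected components}. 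Collecting the accumulated constants yields the claimed $D_k'$.

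I do not anticipate a genuine obstacle: this theorem is essentially a bookkeeping statement on top of Lemmas~\ref{Lemma: Bound twin-width by twin-width of blocks, simpler gadgets} and~\ref{Lemma: Bound twin-width by twin-width of blocks, torso version}, and the only point requiring care is verifying that every linear-in-$M$ and constant term accumulated along the way can be hidden either inside the polynomial term or inside $D_k,D_k'$ — which is precisely where the hypothesis $k-1\geq 2$ is used in part~2.
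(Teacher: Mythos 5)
Your route is the same as the paper's: there, the theorem is obtained simply by combining Lemma~\ref{Lemma: Bound twin-width by twin-width of blocks, simpler gadgets} and Lemma~\ref{Lemma: Bound twin-width by twin-width of blocks, torso version} and folding all lower-order terms into \(D_k\) and \(D_k'\). Your part~1, and your handling of part~2 for \(k\geq 4\), do exactly this bookkeeping correctly (for \(k\ge4\) the deficit \(M+\binom{M}{k-1}-\frac{M^{k-1}}{(k-1)!}\) is a degree-\((k-2)\) polynomial with negative leading coefficient, hence bounded above by a constant depending only on \(k\)).

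The genuine gap is your \(k=3\) case. You defer it to ``the more careful direct argument used in the proof of Theorem~\ref{Theorem: Bound on twin-width by twin-width of quasi-4-connected components}'', but that theorem (i) concerns only the quasi-4-connected components of a \(3\)-connected graph, not an arbitrary tree decomposition of adhesion \(3\); (ii) is itself proved by exactly the same two lemmas, so there is no sharper independent argument to borrow; and (iii) its bound is \(4\bigl(M^2+M\bigr)+14\) with \(M=\max_i\tww(\overline{C}_i)\), i.e.\ it retains precisely the linear term you need to eliminate. Indeed, at \(k=3\) the combination of the lemmas gives \(2^3\bigl(M+\binom{M}{2}\bigr)+2^{4}-2=4M^2+4M+14\), and since the claimed leading coefficient \(2^3/2!=4\) leaves no slack, the term \(4M\) cannot be absorbed into an additive constant \(D_3'\); your own expansion shows the negative degree-\((k-2)\) correction only dominates the linear term once \(k\geq 4\). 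So part~2 at \(k=3\) does not follow by bookkeeping alone; this is in fact a slight inaccuracy already present in the paper's statement (which it glosses over by calling the bounds ``asymptotic''), and an honest fix is either to keep the linear term in the bound for \(k=3\) (as Theorem~\ref{Theorem: Bound on twin-width by twin-width of quasi-4-connected components} does) or to restrict the clean form \(\frac{2^k}{(k-1)!}M^{k-1}+D_k'\) to \(k\geq 4\).
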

}

\subsection{Tri- and quasi-4-connected components}
We now want to apply these general results on the interplay between twin-width and tree decompositions of small adhesion to obtain bounds on the twin-width of graphs in terms of the twin-width of their tri- and quasi-4-connected components.

{
\renewcommand{\thetheorem}{\ref*{Theorem: Bound on twin-width by twin-width of triconnected components}}
\addtocounter{theorem}{-1}%
\begin{theorem}
	Let \(G\) be a \(2\)-connected graph and let $C_1, C_2, \dots, C_{\ell}$ be its triconnected components.
	If we write \(\overline{C}_i\) for the red torsos of the triconnected components \(C_i\), then
	\[\tww(G)\leq \max\left(8\max_{i\in [\ell]}\tww(\overline{C}_i)+6,18\right).\]
	\begin{proof}
		This follows from Lemma~\ref{Lemma: Bound twin-width by twin-width of blocks, torso version}
		applied to the tree of triconnected components of~\(G\) together with the observation
		that for \(k=2\), the second term in the maximum in Lemma~\ref{Lemma: Bound twin-width by twin-width of blocks, torso version}
		is always bounded by the maximum of the first and third term.
	\end{proof}
\end{theorem}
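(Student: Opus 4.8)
The plan is to obtain the statement as a direct specialization of the general torso bound, Lemma~\ref{Lemma: Bound twin-width by twin-width of blocks, torso version}, to adhesion \(k=2\). First I would recall the structure theorem for triconnected components (\cite{triconnected}): every \(2\)-connected graph \(G\) admits a tree decomposition \(\mathcal{T}\) of adhesion at most \(2\) whose torsos are exactly the triconnected components \(C_1,\dots,C_\ell\), each \(3\)-connected, a cycle, or a \(K_2\). The key bookkeeping step is to identify the red torsos \(\overline{P}_t\) attached to \(\mathcal{T}\) with the trigraphs \(\overline{C}_i\) of the statement: in a \(2\)-connected graph every child separator of a part has size exactly \(2\) (a separator of size \(1\) would be a cut vertex), so ``completing it to a red clique'' simply colours the corresponding virtual edge red, and the deletion of a parallel virtual/real edge in favour of the red one is exactly what red-clique completion of a \(2\)-element set does. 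The only mismatch is that \(\overline{P}_t\) leaves the parent separator unmarked; but recolouring a red edge black (or deleting it) never increases twin-width, so \(\tww(\overline{P}_t)\le\tww(\overline{C}_{i(t)})\le\max_{i}\tww(\overline{C}_i)\).

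Next I would substitute \(k=2\) into Lemma~\ref{Lemma: Bound twin-width by twin-width of blocks, torso version}. There \(\binom{2k-3}{k-1}=\binom{1}{1}=1\) and \(\binom{\tww(\overline{P}_t)}{k-1}=\tww(\overline{P}_t)\), so, writing \(w\coloneqq\max_{i\in[\ell]}\tww(\overline{C}_i)\), the three arguments of the outer maximum in that lemma are bounded by \(2^2\cdot 2w+2^3-2=8w+6\), by \(2^2 w+2^2\cdot 1+2^3-2=4w+10\), and by \(4^2+2^2-2=18\) respectively.

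It then remains only to observe that the middle term is redundant: if \(w\ge 1\) then \(8w+6\ge 4w+10\), and if \(w=0\) then \(4w+10=10<18\). Hence \(\tww(G)\le\max(8w+6,18)\), which is the claim. The argument is short precisely because all of the substance is already packed into Lemma~\ref{Lemma: Bound twin-width by twin-width of blocks, torso version}; the only points that genuinely need attention are the routine \(k=2\) arithmetic above and the verification that the red torsos of the triconnected decomposition really are the \(\overline{C}_i\) of the statement, in particular reconciling the parallel-edge deletion rule with red-clique completion and noting that leaving the parent separator unmarked only helps.
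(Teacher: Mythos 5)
Your proposal is correct and follows essentially the same route as the paper: apply Lemma~\ref{Lemma: Bound twin-width by twin-width of blocks, torso version} with \(k=2\) to the tree of triconnected components and note that the middle term \(4w+10\) is dominated by \(\max(8w+6,18)\). The extra care you take in identifying the red torsos \(\overline{P}_t\) with the trigraphs \(\overline{C}_i\) (parallel-edge rule, and the unmarked parent separator handled via the fact that recolouring a red edge black or deleting it cannot increase twin-width) is left implicit in the paper but is a valid and welcome clarification.
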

}

Note that in Theorem~\ref{Theorem: Bound on twin-width by twin-width of triconnected components} we cannot hope for a lower bound similar to
the lower bound in Theorem~\ref{Theorem: Bound on twin-width by twin-width of biconnected components} without dropping the virtual edges.
Indeed, consider a \(3\)-connected graph~\(G\) of large twin-width (e.g. a Paley graph or a Rook's graph).
By \cite{twwleq4NPharda}, a \((2\lceil\log(|G|)\rceil-1)\)-subdivision
\(H\) of \(G\) has twin-width at most \(4\), but its triconnected components are \(G\) and multiple long cycles.
Thus, there exist graphs of twin-width at most \(4\) with triconnected components of arbitrarily large twin-width.

Moreover, the red virtual edges in each separator can also not be replaced by black edges.
Indeed, consider the class of \(1\)-subdivisions of cliques, which has unbounded twin-width
by \cite{tww2}. However, their triconnected components are just the clique itself and a triangle for every edge,
all of which have twin-width \(0\).

In the case of separators of size \(3\), we get two bounds on the twin-width of a graph in terms of its
quasi-4-connected components: one linear bound in terms of the subgraphs induced on the quasi-4-connected components
together with a common red neighbor for every \(3\)-separator along which the graph was split,
and one quadratic bound in terms of the red torsos of the quasi-4-connected components.
{
\renewcommand{\thetheorem}{\ref*{Theorem: Bound on twin-width by twin-width of quasi-4-connected components}}
\addtocounter{theorem}{-1}%
\begin{theorem}
	Let $C_1, C_2, \dots, C_{\ell}$ be the quasi-4-connected components of a 3-connected graph~\(G\).
	\begin{enumerate}
		\item For $i \in [\ell]$ we construct a trigraph $\widehat{C}_i$ by adding for every 3-separator \(S\) in $C_i$
		along which~$G$ was split a vertex \(v_S\) which we connect via red edges to all vertices in \(S\).
		Then
		\[\tww(G)\leq \max\left(8\max_{i\in [\ell]}\tww(\widehat{C}_i)+14, 70\right).\]
		\item For $i \in [\ell]$, denote by \(\overline{C}_i\) the \emph{red torso} of the quasi-4-connected component \(C_i\).
		Then
		\[\tww(G)\leq \max\left(4\max_{i\in[\ell]}\left(\tww(\overline{C}_i)^2+\tww(\overline{C}_i)\right)+14,70\right).\]
	\end{enumerate}
	\begin{proof}
	The two claims follow from Lemma~\ref{Lemma: Bound twin-width by twin-width of blocks, simpler gadgets}
	and Lemma~\ref{Lemma: Bound twin-width by twin-width of blocks, torso version} applied to the tree
	of quasi-4-connected components of \(G\)~\cite{Gro16a} together with the observation
	that also for \(k=3\), the second term in the maximum in Lemma~\ref{Lemma: Bound twin-width by twin-width of blocks, torso version}
	is always bounded by the maximum of the first and third term.
	\end{proof}
\end{theorem}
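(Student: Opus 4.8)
The plan is to apply the general bounds of the previous subsections to the canonical tree decomposition of $G$ into its quasi-4-connected components. First I would invoke Grohe's decomposition theorem~\cite{Gro16a, Gro16b}: since $G$ is $3$-connected it has a tree decomposition $\mathcal{T}$ of adhesion $k\leq 3$ whose part-torsos are exactly the quasi-4-connected components $C_1,\dots,C_\ell$. After rooting $\mathcal{T}$ and applying the normalisation from the start of Section~\ref{sec: small seps} that makes parts with equal parent separators siblings, the trigraphs $\widehat{P}_t$ and $\overline{P}_t$ built there from a part $P_t$ are exactly the trigraphs $\widehat{C}_i$ and $\overline{C}_i$ of the statement: $\widehat{C}_i$ attaches one fresh vertex joined by red edges to each (subset-maximal) child separator, and $\overline{C}_i$ is precisely the red torso. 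So both inequalities are instances of the ``in particular'' conclusions of Lemmas~\ref{Lemma: Bound twin-width by twin-width of blocks, simpler gadgets} and~\ref{Lemma: Bound twin-width by twin-width of blocks, torso version}, specialised to $k=3$.

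For the first bound I would substitute $k=3$ into Lemma~\ref{Lemma: Bound twin-width by twin-width of blocks, simpler gadgets}, getting $\tww(G)\leq\max\bigl(2^3\max_i\tww(\widehat{C}_i)+(2^4-2),\ 4^3+2^3-2\bigr)=\max\bigl(8\max_i\tww(\widehat{C}_i)+14,\ 70\bigr)$, which is exactly the claim. For the second bound I would substitute $k=3$ into Lemma~\ref{Lemma: Bound twin-width by twin-width of blocks, torso version}. Writing $d_i\coloneqq\tww(\overline{C}_i)$ and using $\binom{d}{2}=\tfrac12(d^2-d)$, the three terms in the maximum become $2^3\bigl(d_i+\binom{d_i}{2}\bigr)+14=4(d_i^2+d_i)+14$, then $2^3\max_i d_i+2^3\binom{3}{2}+14=8\max_i d_i+38$, and finally $4^3+2^3-2=70$.

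The only genuine work — and the step I would be most careful about — is verifying that the middle term $8d+38$ never exceeds the maximum of the other two, so that the bound collapses to $\max\bigl(4\max_i(d_i^2+d_i)+14,\ 70\bigr)$. This splits into two elementary cases: for $d\geq 3$ one has $4(d^2+d)+14-(8d+38)=4(d-3)(d+2)\geq 0$, so the middle term is dominated by the first; and for $d\leq 3$ one has $8d+38\leq 62\leq 70$, so it is dominated by the third. (An even shorter version of the same check handles the analogous absorption for $k=2$ in Theorem~\ref{Theorem: Bound on twin-width by twin-width of triconnected components}.) I would also remark in passing that the parts of $\mathcal{T}$ which are not quasi-4-connected have at most four vertices, hence constant twin-width, and therefore contribute nothing new to the maxima; beyond that, the proof is a direct appeal to the cited lemmas plus the arithmetic above, so no fundamentally new idea is needed.
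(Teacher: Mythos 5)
Your proposal is correct and follows essentially the same route as the paper: both apply Lemma~\ref{Lemma: Bound twin-width by twin-width of blocks, simpler gadgets} and Lemma~\ref{Lemma: Bound twin-width by twin-width of blocks, torso version} to Grohe's adhesion-\(3\) tree decomposition into quasi-4-connected components and observe that for \(k=3\) the middle term \(8d+38\) is absorbed by \(\max\bigl(4(d^2+d)+14,\,70\bigr)\). The only difference is that you spell out the substitution \(k=3\) and the absorption case analysis \(4(d-3)(d+2)\geq 0\) for \(d\geq 3\), \(8d+38\leq 62\leq 70\) for \(d\leq 3\), which the paper merely asserts.
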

}

\section{Conclusion and further research}\label{sec:future:work}
We proved that $\tww(G)\leq \frac{3}{2}k + 1 + \frac{1}{2}(\sqrt{k\ln k} + \sqrt{k} + 2\ln k)$ if $G$ is a graph of strong tree-width at most $k$~(Theorem~\ref{thm: tww-vs-stw}). Moreover, we demonstrated that the twin-width of a Paley graph agrees with its strong tree-width (Corollary~\ref{cor: paley}).

We provided a detailed analysis of the relation between the twin-width of a graph and the twin-width of its highly connected components.
We proved a tight linear upper bound on the twin-width of a graph given the twin-width of its biconnected components (Theorem~\ref{Theorem: Bound on twin-width by twin-width of biconnected components}).
There is a linear upper bound for a slightly modified version of triconnected components (Theorem~\ref{Theorem: Bound on twin-width by twin-width of triconnected components}).
By further providing a quadratic upper bound on the twin-width of a graph given the twin-widths of its modified quasi-4-connected components (Theorem~\ref{Theorem: Bound on twin-width by twin-width of quasi-4-connected components}) we took one important step further to complete the picture
of the interplay of the twin-width of a graph with the twin-width of its highly connected components.
As a natural generalization of the above decompositions we considered graphs allowing for a tree decomposition of small adhesion (Theorem~\ref{Theorem: Bound twin-width by twin-width of blocks, hat + torso version} and Theorem~\ref{thm:  widthadhesion}).

It seems worthwhile to integrate our new bounds for practical twin-width computations, for example, with a branch-and-bound approach.

\bibliographystyle{plainurl}
\bibliography{bibliography.bib}

\end{document}